\documentclass[]{amsart}
\usepackage{amssymb}
\usepackage{amsmath}
\usepackage{amsthm}
\usepackage[textwidth=16cm, textheight=24cm, marginratio=1:1]{geometry}

\usepackage[english]{babel}
\usepackage[utf8]{inputenc}
\usepackage[T1]{fontenc}
\usepackage{xcolor}
\usepackage{enumitem}
\usepackage[backref=page]{hyperref}
\numberwithin{equation}{section}
\newtheorem{theorem}{Theorem}[section]
\newtheorem{corollary}[theorem]{Corollary}
\newtheorem{lemma}[theorem]{Lemma}

\newtheorem{proposition}[theorem]{Proposition}
\theoremstyle{definition}

\newtheorem{example}[theorem]{Example}
\hypersetup{
    colorlinks,
    linkcolor={red!80!black},
    citecolor={blue!80!black},
    urlcolor={blue!80!black}
}
\usepackage{setspace}
\setstretch{1.2}
\usepackage{tikz}
\usetikzlibrary{matrix,arrows, cd, calc}  
\DeclareMathOperator{\im}{im}
\DeclareMathOperator{\Spec}{Spec}
\DeclareMathOperator{\Hom}{Hom}
\DeclareMathOperator{\Ext}{Ext}
\DeclareMathOperator{\Sym}{Sym}
\DeclareMathOperator{\Hilb}{Hilb}
\DeclareMathOperator{\Quot}{Quot}
\DeclareMathOperator{\Gr}{Gr}%
\newcommand{\onto}{\twoheadrightarrow}
\newcommand{\into}{\hookrightarrow}

\newcommand{\kk}{\Bbbk}%

\newcommand{\OO}{\mathcal{O}}
\newcommand{\pts}{\mathrm{pts}}
\newcommand{\mm}{\mathfrak{m}}
\newcommand{\inn}{\operatorname{in}}
\newcommand{\BBname}{Bia{\l}ynicki-Birula}%
\newcommand{\Gmult}{\mathbb{G}_m}%

\begin{document}
\selectlanguage{english}
\title[Generically nonreduced components of Hilbert schemes]{Generically nonreduced components of Hilbert schemes on
fourfolds}
\author[Jelisiejew]{Joachim~Jelisiejew}
\address{Faculty of Mathematics, Informatics and Mechanics, University of Warsaw, Banacha 2, 02-097 Warsaw}
\email{j.jelisiejew@uw.edu.pl}
\thanks{University of Warsaw. Partially supported by
National Science Centre grant 2020/39/D/ST1/00132. The article is accepted in
the special volume of \emph{Rendiconti del seminario matematico di Torino} dedicated to Prof. Casnati.}
\dedicatory{To Gianfranco}
\maketitle

    \begin{abstract}
        We exhibit generically nonreduced components
        of the Hilbert scheme of at least $21$ points on a smooth
        variety of dimension at least four. The result was announced
        in~\cite{Jelisiejew__open_problems} and answers a
        question~\cite[Problem~3.8]{aimpl}. The method is similar to the one
        of~\cite[\S6]{jelisiejew_sivic}.
    \end{abstract}

    \section{Introduction}

        Let $X$ be a smooth quasi-projective variety over a field $\kk$.
        The Hilbert scheme of $d$ points $\Hilb_d(X)$ is a moduli space of central importance, with
        applications to combinatorics~\cite{Haiman_macdonald,
        haiman_factorial_conjecture}, algebra, enumerative
        geometry~\cite{Ricolfi__Modern_Enumerative_Geometry}, and
        classical algebraic geometry~\cite{Beauville__Hilbert_K3}. Many of the applications are limited to
        the case when $\dim X\leq 2$ as in this case $\Hilb_d(X)$ is
        smooth~\cite{fogarty}. For a good and gentle introduction to Hilbert schemes,
        see~\cite{Bertin__punctual_Hilbert_schemes}
        or~\cite[Chapter~18]{Miller_Sturmfels}. See
        also~\cite{Jelisiejew__open_problems} for a list of open problems.

        The possible singularities of $\bigsqcup_d\Hilb_d(X)$ for $\dim X \geq 3$ are only
        partially understood. A point $[Z]\in \Hilb_d(X)$ is smooth for every
        $Z\subseteq X$ which can be embedded into a smooth surface. As a very
        particular case, this implies that $\Hilb_{d}(X)$ is smooth for $d\leq
        3$. In contrast, the Hilbert scheme $\Hilb_{d}(X)$ is singular for every
        $d\geq 4$ and $\dim X \geq 3$, in fact for every $x\in X$, any degree
        $d$ subscheme
        $Z\subseteq V(\mm_x^2)$ gives a singular point~\cite[Cor~18.30]{Miller_Sturmfels}.

        The singularities in the case $\dim X = 3$ are constrained as the
        Hilbert scheme is a critical
        locus~\cite{Dimca_Szendroi__Hilbert_of_A3}. Understanding the
        singularities is a very active research area, see for
        example~\cite{Graffeo_Giovenzana_Lella, Ramkumar_Sammartano_parity,
        Kool_Jelisiejew_Schmierman, Rezaee__Conjectural_most_singular}.

        The singularities in the case $\dim X \geq 16$ can be almost
        arbitrary: the Hilbert scheme satisfies Murphy's Law
        up to retraction, see~\cite{Jelisiejew__Pathologies}.
        For important singularity types, such as nonreduced ones, sharper bounds on $\dim X$ are known.
        Szachniewicz~\cite{Szachniewicz} proved that $\Hilb_{d}(\mathbb{A}^6)$
        is nonreduced for every $d\geq 13$; it has an embedded component.
        See~\cite{erman_Murphys_law_for_punctual_Hilb, Schmiermann} for some
        results in similar direction on fixed loci.

        One instance where \emph{up to retraction} cannot be ignored is when we consider generic
        nonreducedness.
        In particular, the results above do not prove that the
        Hilbert scheme has any generically nonreduced components.
        Proving that
        such components do exist and already in codimension four is the main aim of the current article.

        \subsection{Generic nonreducedness}
        We work over a field of characteristic zero, in particular over a
        perfect field. An irreducible component of a
        finite type $\kk$-scheme is either \emph{generically smooth}, that is,
        its general point is smooth, or \emph{generically nonreduced} which
        means that every point is nonreduced.

        The problem is that generic nonreducedness does not propagate along
        retractions.
        For example, consider
        \[
            \frac{\kk[\![y]\!]}{(y^2)}\into \frac{\kk[\![x, y]\!]}{(xy, y^2)}.
        \]
        The source of this map is generically nonreduced, while the target is
        generically reduced. Geometrically speaking, the above map comes from
        a retraction of $V(xy, y^2)\subseteq \mathbb{A}^2$ onto $V(y^2)
        \subseteq \mathbb{A}^1$ by contracting the $x$ axis:
        \[
            \begin{tikzpicture}
                \draw[color=gray, very thin] (-2.25, -1.5) -- (2.25, -1.5);
                \draw[color=gray, very thin] (-2.25, -1.0) -- (2.25, -1.0);
                \draw[color=gray, very thin] (-2.25, -0.5) -- (2.25, -0.5);
                \draw[color=gray, very thin] (-2.25, 1.5) -- (2.25, 1.5);
                \draw[color=gray, very thin] (-2.25, 1.0) -- (2.25, 1.0);
                \draw[color=gray, very thin] (-2.25, 0.5) -- (2.25, 0.5);
                \draw[color=gray, very thin] (-2, -1.75) -- (-2, 1.75);
                \draw[color=gray, very thin] (-1.5, -1.75) -- (-1.5, 1.75);
                \draw[color=gray, very thin] (-1, -1.75) -- (-1, 1.75);
                \draw[color=gray, very thin] (-0.5, -1.75) -- (-0.5, 1.75);
                \draw[color=red] (0, -1.75) -- (0, 1.75);
                \draw[color=gray, very thin] (0.5, -1.75) -- (0.5, 1.75);
                \draw[color=gray, very thin] (1, -1.75) -- (1, 1.75);
                \draw[color=gray, very thin] (1.5, -1.75) -- (1.5, 1.75);
                \draw[color=gray, very thin] (2, -1.75) -- (2, 1.75);
                \draw[line width=1pt] (-3, 0) -- (3, 0);
                \draw[line width=1pt, ->] (0, 0) -- (0, 0.3);
                \draw[red] (5, -1.5) -- (5, 1.5);
                \draw[line width=1pt, ->] (5, 0) -- (5, 0.3);
                \draw[line width=0.5pt, color=blue, ->] (-2.5, 0.75) -- (0,
                0.75);
                \draw[line width=0.5pt, color=blue, ->] (2.5, 0.75) -- (0,
                0.75);
                \draw[->, color=blue, line width=0.5pt] (3.5, 0) -- (4, 0);
            \end{tikzpicture}
        \]
        Therefore, from~\cite{Jelisiejew__Pathologies} it does not follow
        that $\Hilb_d(\mathbb{A}^{16})$ admits generically nonreduced
        components. Neither it follows from subsequent paper of
        Szachniewicz~\cite{Szachniewicz}.
        In contrast, in the paper~\cite{jelisiejew_sivic} the authors show that
        $\Quot_{8}(\mathbb{A}^4)$ has a generically nonreduced component.

        The aim of the present note is to apply the method of
        Jelisiejew-\v{S}ivic to the case of the Hilbert scheme and show the
        following theorem, which resolves~\cite[Problem~3.8]{aimpl}. Let
        $\mathcal{L}_{H}\subseteq \Hilb_{\pts}(\mathbb{A}^n)$ denote the locus
        of $[Z]$ such that $Z = \Spec(A)$ is an irreducible scheme
        corresponding to the local algebra $A$ with Hilbert function $H_A = H$.

        \begin{theorem}\label{ref:mainthm}
            Let $\kk$ be a field of characteristic zero and let $H =
            (1,4,10,s)$ for $s\in \{6,7,8,9\}$. Then $\mathcal{L}_{H}
            \subseteq \Hilb_{15+s}(\mathbb{A}^4)$ is an
            irreducible component, and this component, with the scheme
            structure inherited from the Hilbert scheme, is generically
            nonreduced. Therefore, the Hilbert scheme
            $\Hilb_{d}(\mathbb{A}^4)$ admits generically nonreduced components
            for all $d\geq21$.
        \end{theorem}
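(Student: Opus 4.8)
The plan is to parametrise $\mathcal{L}_H$ explicitly, recognise it as a Grassmann bundle over $\mathbb{A}^4$, prove it is an irreducible component by a \BBname-type dimension estimate, and finally compute the tangent space of the Hilbert scheme at a general point of $\mathcal{L}_H$; the statement for all $d\ge 21$ will then follow by adding general reduced points. Throughout, this mimics the strategy of~\cite{jelisiejew_sivic}.

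\emph{Structure of $\mathcal{L}_H$.} First I would pin down the points of $\mathcal{L}_H$. Write $P=\kk[x_1,\dots,x_4]$, $\mm=(x_1,\dots,x_4)$, and let $[Z]=[\Spec(P/I)]\in\mathcal{L}_H$; after a translation assume $Z$ is supported at the origin. From $H_A(1)=4$ we get $I\subseteq\mm^2$; then $H_A(2)=10=\dim_\kk\mm^2/\mm^3$ forces $I\cap\mm^2\subseteq\mm^3$, hence $I\subseteq\mm^3$; and $H_A(3)=s$ with $\mm^4_A=0$ gives $\mm^4\subseteq I$. Replacing the generators of $I$ by their leading cubic forms then shows $I=W\oplus\mm^4$ for a homogeneous $(20-s)$-dimensional subspace $W\subseteq\mm^3$ (here $\dim_\kk\mm^3/\mm^4=20$), and conversely every such $W$ gives a point of $\mathcal{L}_H$; in particular every member of $\mathcal{L}_H$ is homogeneous up to translation. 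Hence $\mathcal{L}_H$ is a Zariski-locally trivial $\Gr(20-s,20)$-bundle over $\mathbb{A}^4$; it is closed and irreducible, and
\[
\dim\mathcal{L}_H\;=\;4+s(20-s).
\]

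\emph{$\mathcal{L}_H$ is an irreducible component.} This is the part I expect to be the main obstacle. Equip $\mathbb{A}^4$ with the scaling $\Gmult$-action, whose fixed points are exactly the homogeneous ideals; since $\Hilb_{15+s}(\mathbb{A}^4)$ embeds $\Gmult$-equivariantly into the proper scheme $\Hilb_{15+s}(\mathbb{P}^4)$, the limit $\lim_{t\to0}t\cdot[Z']$ (the Gröbner degeneration) exists for every $[Z']$ and defines a retraction $r\colon\Hilb_{15+s}(\mathbb{A}^4)\to\Hilb_{15+s}(\mathbb{A}^4)^{\Gmult}$. Because the Hilbert function is locally constant in flat families of graded modules, $\mathcal{L}_H^0:=\{[Z']\in\mathcal{L}_H:\ Z'\ \text{is supported at}\ 0\}\cong\Gr(20-s,20)$ is open and closed in the fixed locus; thus $r$ maps a neighbourhood $V$ of a general $[Z]\in\mathcal{L}_H^0$ into $\mathcal{L}_H^0$ and dominates a neighbourhood of $[Z]$ there, whence
\[
\dim_{[Z]}\Hilb_{15+s}(\mathbb{A}^4)\ \le\ \dim\mathcal{L}_H^0+\max_{[Z_0]}\dim\bigl(r^{-1}([Z_0])\cap V\bigr)\ =\ s(20-s)+\max_{[Z_0]}\dim\bigl(r^{-1}([Z_0])\cap V\bigr).
\]
So the crux is to bound the attracting cells $r^{-1}([Z_0])$ by $4$ for $[Z_0]\in\mathcal{L}_H^0$. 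If $Z'$ is supported at one point $q$, writing $Z'=q+Z''$ with $Z''$ at the origin, $r([Z'])=[Z_0]$ means $\inn(Z'')=Z_0$; since $\inn(Z'')$ has the same Hilbert function as $Z''$, this equals $H$, so by the first part $I_{Z''}$ is homogeneous and $Z''=Z_0$, i.e.\ $Z'$ is a translate of $Z_0$ — a $4$-dimensional family. The hard part will be the $Z'$ with support at $\ge 2$ points: one stratifies them by the combinatorial type of the support and bounds the dimension of each stratum against $\dim\mathcal{L}_H^0=s(20-s)$, using that the degeneration map from such configurations to $\mathcal{L}_H^0$ is $\Gmult$-invariant with fibres that are generically finite over their image — indeed for $s\in\{7,8,9\}$ the relevant strata have dimension at most $4(15+s)<s(20-s)$, so a general $Z_0$ is the limit of no reducible scheme at all. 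Granting $\dim r^{-1}([Z_0])\le 4$, we obtain $\dim_{[Z]}\Hilb_{15+s}(\mathbb{A}^4)\le 4+s(20-s)=\dim\mathcal{L}_H$, and as $\mathcal{L}_H$ is irreducible of exactly this dimension and passes through $[Z]$, it is an irreducible component.

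\emph{Generic nonreducedness, and all $d\ge 21$.} For a general $[Z]\in\mathcal{L}_H$ the subspace $W$ is generic with $\dim W=20-s\ge 9$, so $V\otimes W\to\mm^4/\mm^5$ (with $V=\spann{x_1,\dots,x_4}$) is surjective, $I=(W)$ is minimally generated in degree $3$, and it has $4(20-s)-35$ linear syzygies. Decompose $T_{[Z]}\Hilb_{15+s}(\mathbb{A}^4)=\Hom_P(I,A)$ by $\Gmult$-weight: the weight-$0$ part is $\Hom_\kk(W,A_3)$, of dimension $s(20-s)$, which is precisely the tangent space to $\mathcal{L}_H^0$; the weight-$1$ part is $\ker\bigl(\Hom_\kk(W,\mm^2/\mm^3)\to\Hom_\kk(\mathrm{Syz},A_3)\bigr)$, of dimension at least $10(20-s)-s\bigl(4(20-s)-35\bigr)$, which exceeds $4$ for $s\in\{6,7,8,9\}$; and the higher weights contribute a nonnegative amount. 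Hence $\dim T_{[Z]}\Hilb_{15+s}(\mathbb{A}^4)>4+s(20-s)=\dim\mathcal{L}_H$ on a dense open subset of $\mathcal{L}_H$, so this component is generically nonreduced. Finally, for $d>24$ take $[Z_0]\in\mathcal{L}_{(1,4,10,6)}$ general and $Z=Z_0\sqcup\{d-21\ \text{general points}\}$; locally at $[Z]$ the Hilbert scheme splits as $\Hilb_{21}(\mathbb{A}^4)\times\Hilb_{d-21}(\mathbb{A}^4)$, and the product of the generically nonreduced component $\mathcal{L}_{(1,4,10,6)}$ with the (generically smooth) smoothable component of $\Hilb_{d-21}(\mathbb{A}^4)$ is a generically nonreduced component of $\Hilb_d(\mathbb{A}^4)$, covering all $d\ge 21$.
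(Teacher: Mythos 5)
Your skeleton --- identify $\mathcal{L}_H$ with an $\mathbb{A}^4\times\Gr(20-s,20)$, retract a neighbourhood onto the fixed locus via the scaling $\Gmult$-action, show that the fibre of the retraction over a suitable $[Z_0]$ is only the $4$-dimensional family of translates, and then exhibit excess tangent vectors of weight $-1$ --- is exactly the paper's strategy, and your final tangent-space count is essentially Lemma~\ref{ref:tangentAtVeryCompressed:lemma}. But the step you yourself call the crux is not proved, and the argument you sketch for it fails. The locus of reducible $Z'$ whose $\Gmult$-limit lies in $\mathcal{L}_H^0$ is not bounded by $4(15+s)$: that number only accounts for configurations of reduced points. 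For $s=9$ take $Z''\in\mathcal{L}^0_{(1,4,10,8)}$ with $I_{Z''}=W\oplus\mm^4$, $\dim W=12$, and a point $p\neq0$ with $\mathrm{ev}_p|_W\neq0$; then $\inn\bigl(I_{Z''\sqcup\{p\}}\bigr)=(W\cap\ker\mathrm{ev}_p)\oplus\mm^4$ lies in $\mathcal{L}^0_{(1,4,10,9)}$, and this stratum of reducible schemes has dimension $96+4=100$, exceeding both your bound $4\cdot24=96$ and $\dim\mathcal{L}^0_{(1,4,10,9)}=99$. (This does not contradict the theorem, because the induced map to $\Gr(11,20)$ is far from generically finite: its image is the proper closed locus of those $W'$ with $V(W')\supsetneq\{0\}$, over which the fibres are positive-dimensional.) So both ingredients of your count --- the bound $4(15+s)$ and the asserted generic finiteness --- are false, and even your stated inequality $4(15+s)<s(20-s)$ degenerates to an equality at $s=6$. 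The one-point case has a gap too: the graded Hilbert function of $S/\inn(I)$ (top-degree forms) need not equal the local Hilbert function of $S/I$ --- the curvilinear $V(x-y^2,y^3)$ has local Hilbert function $(1,1,1)$ but degenerates to $V(\mm^2)$ with Hilbert function $(1,2)$ --- so you cannot conclude this way that every irreducible scheme in the fibre is a translate of $Z_0$.

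The paper closes precisely this gap by a different mechanism: the primary obstruction. Theorem~\ref{ref:primary:thm} identifies the order-two obstructions with Yoneda squares in $\Ext^2(\OO_Z,\OO_Z)$; Proposition~\ref{ref:BBfiber:prop} shows that the \BBname{} fibre is a closed subscheme of the locus cut out by the resulting quadrics in the weight-negative tangent variables; and Proposition~\ref{ref:fibredim:prop} is a \emph{Macaulay2} verification, for an explicit $[Z]$ in each case $s=6,\dots,9$, that these quadrics alone already cut out a $4$-dimensional scheme. Some such input, computational or structural, is indispensable: without it the claim that $\mathcal{L}_H$ is a component --- and hence everything downstream --- remains unestablished.
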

        Prior to Theorem~\ref{ref:mainthm} it was not known whether
        $\Hilb_d(\mathbb{A}^4)$ or $\Hilb_d(\mathbb{A}^5)$ are \emph{reduced}
        for all $d$.
        It remains an open question whether $\Hilb_d(\mathbb{A}^3)$ is nonreduced
        for $d$ high enough and whether this scheme has generically nonreduced
        components, see~\cite[Problem~XIV]{Jelisiejew__open_problems}.

        There are three main steps of the argument. First, the locus
        $\mathcal{L}_H$ is closed for the functions $H$ as in theorem. Moreover,
        it is contained in a dominant \BBname{} cell, which
        implies that on an open subset $U\subseteq\Hilb_{15+s}(\mathbb{A}^4)$
        containing $\mathcal{L}_H$, the Hilbert scheme admits
        a retraction $\pi\colon U\to U^{\Gmult}$ which maps any point
        $[\Spec(S/I)]\in U$ to $\Spec(S/\inn(I))$, where $S = \kk[x_1, \ldots,
        x_4]$ and $\inn(I)$ is the ideal of top degree forms.

        Second, primary obstruction yields quadratic equations
        for the fibre
        \[
            \pi^{-1}([Z])\subseteq \left(T_{\Hilb_{15+s}(\mathbb{A}^4),
        [Z]}\right)_{<0}.
        \]

        Third, for a chosen $[Z_0]\in \mathcal{L}_H$ we computer-check using \emph{Macaulay2} that
        the quadrics alone cut out a $4$-dimensional scheme in the affine
        space $\left(T_{\Hilb_{15+s}(\mathbb{A}^4), [Z_0]}\right)_{<0}$. It follows that $\dim
        \pi^{-1}([Z_0])\leq 4$.
        The fibre $\pi^{-1}([Z_0])$ is a cone and has a translation action by
        $\mathbb{A}^4$, so the fibre is equal to $\{Z_0 + v\ |\ v\in
            \mathbb{A}^4\}$ as a set
        and hence $\mathcal{L}_H$ contains an open neighbourhood of $[Z_0]$,
        so this locus
        is a component. A syzygetic argument shows that
        the containment $T_{\mathcal{L}_H, [Z]} \subseteq
        T_{\Hilb_{15+s}(\mathbb{A}^4), [Z]}$ is strict for every $[Z]\in
        \mathcal{L}_H$, hence $\mathcal{L}_H$ cannot be generically reduced.

        \subsection{Open questions and possible generalizations}

            Consider now $\Hilb_d(\mathbb{A}^n)$ and the unique very
            compressed Hilbert function $H = H_{n,d}$ given by the condition
            that there exists a $\delta$ such that
            \[
                H_{n,d}(i) = \begin{cases}
                    \binom{n+i-1}{i} = \dim \kk[x_1, \ldots ,x_n]_i & \mbox{
                    for } i < \delta\\
                    0 & \mbox{ for } i > \delta\\
                    d - \sum_{i=0}^{\delta-1} \binom{n+i-1}{i} & \mbox{ for }
                    i =\delta
                \end{cases}
            \]
            The locus $\mathcal{L}_H\subseteq \Hilb_d(\mathbb{A}^n)$ is irreducible and closed also in this more general
            case.  We then have three possibilities for a general $[Z]\in
            \mathcal{L}_H$:
            \begin{enumerate}
                \item[SMOOTH]\label{it:smooth} the scheme $[Z]$ has only trivial negative tangents, so
                    $\mathcal{L}_H$ is a component and $[Z]$ is a smooth point
                    of $\Hilb_d(\mathbb{A}^n)$ on this component,
                \item[DEFORMS]\label{it:deforms} the scheme $[Z]$ has nontrivial negative tangents and
                    some of them ``integrate'', that is,
                    the fibre $\pi^{-1}([Z])$, which is a cone, contains more points than just
                    $\mathbb{A}^n$. In this case $\mathcal{L}_H$ is not an
                    irreducible component and without additional information we cannot say much about
                    whether its points are reduced in $\Hilb_d(\mathbb{A}^n)$.
                \item[NONRED]\label{it:nonred} the scheme $[Z]$ has nontrivial negative tangents and
                    $\pi^{-1}([Z])$ is, as a topological space, equal to
                    $\mathbb{A}^n$. In this case $\mathcal{L}_H$ is a
                    generically nonreduced component.
            \end{enumerate}

            \newcommand{\refsmooth}{\hyperref[it:smooth]{SMOOTH}}
            \newcommand{\refdeforms}{\hyperref[it:deforms]{DEFORMS}}
            \newcommand{\refnonred}{\hyperref[it:nonred]{NONRED}}

            \begin{example}
                By~\cite{CEVV} the case~\refsmooth{} occurs for example for $H =
                (1,4,3)$. The case~\refdeforms{} occurs for example for $H =
                (1,4,4)$. The case~\refnonred{} occurs for $H$ as in
                Theorem~\ref{ref:mainthm}.
            \end{example}
            We stress that above we look at a general point of
            $\mathcal{L}_H$. This makes a difference: for example for $H =
            (1,6,6)$ the case~\refsmooth{} occurs, so $\mathcal{L}_H$ is a
            generically smooth component, however
            Szachniewicz~\cite{Szachniewicz} found an embedded component of
            $\Hilb_{13}(\mathbb{A}^6)$ inside $\mathcal{L}_H$. It is a
            completely open problem to understand whether having an embedded
            component is typical or exceptional for $\mathcal{L}_H$ which fall
            into the~\refsmooth{} case.

            One motivation to discuss the more general situation is the case
            $n=3$, the Hilbert scheme of $\mathbb{A}^3$. Taking $d = 96$ and
            $H = H_{3, 96}$ we
            get that $\mathcal{L}_H$ is too big to fit in the smoothable
            component of $\Hilb_{96}(\mathbb{A}^3)$,
            see~\cite{iarrobino_reducibility}. A syzygetic argument, see
            Lemma~\ref{ref:tangentAtVeryCompressed:lemma} below, also shows
            that the case~\refsmooth{} cannot hold. Moreover, it is known
            that the fibre $\pi^{-1}([Z])$ for a general $[Z]$ is cut out by
            quadrics only. Actually, this holds whenever
            $(T^2_{[Z]})_{<-2} = 0$, where $T^2_{[Z]} \subseteq \Ext^1(I_Z,
            \OO_Z)$ is the Schessinger's functor, see~\cite[Chapter 3]{HarDeform}.
            It is possible to obtain the quadrics explicitly using
            \emph{Macaulay2}. However, the Gr\"obner basis computation
            necessary for determining $\dim \pi^{-1}([Z])$ is out of reach, at
            least using standard algorithms. We warn the reader that it is not
            clear, even intuitively, whether we should expect \refnonred{}
            or \refdeforms{} in this case, since it may be that
            $\mathcal{L}_H$ lies in the closure of a compressed (not very
            compressed) component similar to the ones discussed
            in~\cite{iarrobino_compressed_artin}.

            The question about $H = (1,4,10,s)$ in~\cite{aimpl} is also formulated
            for $s=10$. In this case one could try the approach above, however
            there are $50$ negative tangents (see
            Lemma~\ref{ref:tangentAtVeryCompressed:lemma} below) and the approach is
            infeasible on our hardware. Of course, perhaps this is
            only a question of computational cost, however we prefer to leave
            the case $s=10$ open, in the hope that it will stimulate further
            progress on understanding the Yoneda multiplication in
            $\Ext^{\bullet}(\OO_Z, \OO_Z)$ and in particular the primary obstruction.

            \section{Acknowledgements}

                We would like to thanks the Editors, in particular Ada
                Boralevi, for making this volume possible. We also thank the
                organizers of the GC workshop: Ada, Enrico, Paolo, and Roberto
                for this serene and productive time. The comments of the
                anonymous referee were helpful as well.

            \section{Preliminaries}

            We work over a field $\kk$ of characteristic zero. The
            characteristic assumption will be used mostly for justifying the
            computations (we believe that the result holds for most
            characteristics). Let $S =
            \kk[x_1, \ldots ,x_n]$ be a polynomial ring and $\mathbb{A}^n =
            \Spec(S)$.  For a subscheme $Z\subseteq \mathbb{A}^n$ we denote by
            $I_Z$ its ideal and by $\OO_Z = S/I_Z$ its coordinate ring.
                \begin{proposition}
                    The tangent space to $[Z]\in \Hilb_d(\mathbb{A}^n)$ is
                    given by $\Hom_S(I_Z, \OO_Z)$. This space is canonically
                    isomorphic to $\Ext^1_S(\OO_Z, \OO_Z)$.
                \end{proposition}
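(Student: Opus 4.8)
The plan is to establish the two assertions separately: first identify the tangent space with first-order deformations of $Z$ by an explicit computation, then convert that description into $\Ext^1$ by a short homological argument. For the first part I would start from the functor of points of the Hilbert scheme: $T_{\Hilb_d(\mathbb{A}^n),[Z]}$ is the set of $\kk$-morphisms $\Spec(\kk[\epsilon]/(\epsilon^2))\to\Hilb_d(\mathbb{A}^n)$ sending the closed point to $[Z]$, which by definition of $\Hilb_d$ is the set of closed subschemes $\mathcal{Z}\subseteq\mathbb{A}^n\times\Spec(\kk[\epsilon]/(\epsilon^2))$ that are flat over $R:=\kk[\epsilon]/(\epsilon^2)$ and restrict to $Z$ over $\epsilon=0$. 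Writing $S_R:=S\otimes_\kk R$, such a $\mathcal{Z}$ is the same datum as an ideal $J\subseteq S_R$ with $S_R/J$ flat over $R$ and $J+(\epsilon)=I_Z+(\epsilon)$. The computation then has two ingredients: (i) flatness of $S_R/J$ over $R$ is equivalent to $J\cap(\epsilon)=\epsilon I_Z$, by the local criterion of flatness over the dual numbers (vanishing of $\operatorname{Tor}_1^R(S_R/J,\kk)$); and (ii) under this condition, the assignment $J\mapsto\varphi_J$, where $\varphi_J\colon I_Z\to\OO_Z$ sends $f\in I_Z$ to $g\bmod I_Z$ for any lift $f+\epsilon g\in J$ (which exists by $J+(\epsilon)=I_Z+(\epsilon)$ and is unique modulo $\epsilon I_Z$ by (i)), is a well-defined $S$-linear bijection onto $\Hom_S(I_Z,\OO_Z)$, with inverse $\varphi\mapsto J_\varphi:=\{\,f+\epsilon g\mid f\in I_Z,\ g\bmod I_Z=\varphi(f)\,\}$. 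This is the classical deformation-theoretic computation; see \cite{HarDeform}.

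For the second assertion I would apply $\Hom_S(-,\OO_Z)$ to the canonical presentation $0\to I_Z\to S\to\OO_Z\to0$ and extract the exact piece
\[
\Hom_S(S,\OO_Z)\xrightarrow{\ \rho\ }\Hom_S(I_Z,\OO_Z)\xrightarrow{\ \partial\ }\Ext^1_S(\OO_Z,\OO_Z)\to\Ext^1_S(S,\OO_Z).
\]
The rightmost group vanishes because $S$ is free, so $\partial$ is surjective; and $\rho$ is the zero map, since a homomorphism $S\to\OO_Z$ is multiplication by its value on $1\in S$, whose restriction to $I_Z$ vanishes because $I_Z\cdot\OO_Z=0$ (equivalently, $\rho$ sits in $0\to\Hom_S(\OO_Z,\OO_Z)\to\Hom_S(S,\OO_Z)\xrightarrow{\rho}\cdots$ with the first map the isomorphism $\OO_Z\cong\OO_Z$). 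Hence $\partial$ is an isomorphism $\Hom_S(I_Z,\OO_Z)\xrightarrow{\ \sim\ }\Ext^1_S(\OO_Z,\OO_Z)$, and it is canonical, being the connecting map of the canonical presentation of $\OO_Z$.

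The step I expect to be the only non-formal one is ingredient (i): verifying that flatness of $S_R/J$ over $\kk[\epsilon]/(\epsilon^2)$ is exactly the condition $J\cap(\epsilon)=\epsilon I_Z$, so that $\varphi_J$ is well-defined and, conversely, $J_\varphi$ genuinely defines a flat family. This is routine — it is precisely the first-order case of the local criterion of flatness — but it is the one place where one actually uses that $R$ is the ring of dual numbers rather than merely manipulating $S$-linear maps. Everything else ($S$-linearity of $\varphi_J$, the mutual-inverse check for $J\mapsto\varphi_J$ and $\varphi\mapsto J_\varphi$, and the three-term diagram chase for the $\Ext$ identification) is elementary.
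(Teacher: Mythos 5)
Your proposal is correct and follows essentially the same route as the paper: the paper cites Str\o mme's self-contained account for the identification of the tangent space with $\Hom_S(I_Z,\OO_Z)$ (which is exactly the dual-numbers/flatness computation you spell out), and for the second assertion it uses the identical long exact sequence obtained by applying $\Hom_S(-,\OO_Z)$ to $0\to I_Z\to S\to\OO_Z\to 0$. Your added observations that $\Ext^1_S(S,\OO_Z)=0$ and that the restriction map $\Hom_S(S,\OO_Z)\to\Hom_S(I_Z,\OO_Z)$ vanishes because $I_Z\cdot\OO_Z=0$ are precisely the details implicit in the paper's argument.
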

                \begin{proof}
                    A self-contained proof for $\Hom_S(I_Z, \OO_Z)$ can be found
                    in~\cite{Stromme_Hilbert}; also the Ext functor naturally
                    appears there. The isomorphism
                    \[
                        \Hom_S(I_Z, \OO_Z)\to \Ext^1_S(\OO_Z,\OO_Z)
                    \]
                    follows from the long exact sequence obtained by applying
                    $\Hom_S(-,\OO_Z)$ to $0\to I_Z\to S\to \OO_Z\to 0$.
                \end{proof}
                Further in the paper, when discussing $\Hom_S(-,-)$ and $\Ext_S^{\bullet}(-,-)$, we drop the
                subscript $S$ from the notation.

                If $I_Z$ is presented as
                \[
                    S^{\oplus d_2}\to S^{\oplus d_1}\to I_Z\to 0,
                \]
                then $\Hom(I_Z, \OO_Z)$ is the kernel of the natural map
                $\Hom(S^{\oplus d_1}, \OO_Z)\to \Hom(S^{\oplus d_2}, \OO_Z)$.
                Computing this kernel is best performed with
                a computer.

                We propose one example, which is straightforward, but it will
                be important in the following. Recall that when $I_Z$ is
                graded, also the tangent space $\Hom_S(I_Z, \OO_Z)$ is graded
                with
                \[
                    \Hom_S(I_Z, \OO_Z)_i = \left\{ \varphi\colon I_Z\to \OO_Z\
                        |\ \varphi((I_Z)_j)\subseteq (\OO_Z)_{i+j}\mbox{ for
                        all } j\right\}.
                \]
                \begin{lemma}\label{ref:tangentAtVeryCompressed:lemma}
                    Suppose that $[Z]\in \Hilb_{15+s}(\mathbb{A}^4)$ is given by a
                    \emph{homogeneous} ideal $I_Z$ and that $\OO_Z$ is very
                    compressed with Hilbert function $(1,4,10,s)$. Suppose
                    further that $I_Z$ is generated by cubics.
                    Then
                    \[
                        \dim \Hom(I_Z, \OO_Z)_0 = (20-s)s\quad \mbox{and}\quad
                        \dim \Hom(I_Z, \OO_Z)_{-1} \geq 4s^2 - 55s + 200.
                    \]
                \end{lemma}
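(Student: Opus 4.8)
The plan is to reduce both statements to elementary linear algebra, using two consequences of the hypotheses: since $I_Z$ is generated by cubics, a graded homomorphism $\varphi\colon I_Z\to\OO_Z$ of degree $i$ is determined by its restriction $(I_Z)_3\to(\OO_Z)_{3+i}$; and since $\OO_Z$ is concentrated in degrees $\le 3$, such a $\kk$-linear restriction extends to a module homomorphism precisely when it is compatible with the syzygies among the cubics of low enough degree.

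Fix a $\kk$-basis $f_1,\dots,f_m$ of $(I_Z)_3$, so that $m=\dim S_3-\dim(\OO_Z)_3=20-s$, and let $R\subseteq S(-3)^{\oplus m}$ be the module of syzygies of $(f_1,\dots,f_m)$. For a homogeneous syzygy $(g_1,\dots,g_m)\in R$ with $g_k\in S_{d-3}$ and a $\kk$-linear map $\Phi\colon(I_Z)_3\to(\OO_Z)_{3+i}$, the element $\sum_k g_k\Phi(f_k)$ lies in $(\OO_Z)_{d+i}$. Decomposing an arbitrary syzygy into homogeneous parts, $\Phi$ extends to an element of $\Hom(I_Z,\OO_Z)_i$ if and only if $\sum_k g_k\Phi(f_k)=0$ for every homogeneous syzygy, and this is automatic once $d+i\ge 4$; moreover $R$ has no nonzero element of degree $\le 3$ because $f_1,\dots,f_m$ are linearly independent.

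For $i=0$ the only potentially obstructing syzygies would have degree $\le 3$, and there are none, so restriction is an isomorphism $\Hom(I_Z,\OO_Z)_0\cong\Hom_\kk\bigl((I_Z)_3,(\OO_Z)_3\bigr)$ and $\dim\Hom(I_Z,\OO_Z)_0=(20-s)s$. For $i=-1$ the only obstructing syzygies have degree $\le 4$, hence degree exactly $4$; they form the space of linear syzygies $L=\ker\bigl(S_1\otimes(I_Z)_3\to (I_Z)_4\bigr)$, the map being multiplication. Since $(\OO_Z)_4=0$ we have $(I_Z)_4=S_4$, and since $I_Z$ is generated by cubics this multiplication map is surjective, so $\dim L=4(20-s)-35=45-4s$. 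A $\kk$-linear map $\Phi\colon(I_Z)_3\to(\OO_Z)_2$ then lies in $\Hom(I_Z,\OO_Z)_{-1}$ exactly when $\sum_k\ell_k\Phi(f_k)=0$ in $(\OO_Z)_3$ for every $\sum_k\ell_k\otimes f_k\in L$, which identifies $\Hom(I_Z,\OO_Z)_{-1}$ with the kernel of a linear map $\Hom_\kk\bigl((I_Z)_3,(\OO_Z)_2\bigr)\to\Hom_\kk\bigl(L,(\OO_Z)_3\bigr)$, of source dimension $10(20-s)$ and target dimension $s(45-4s)$. Hence
\[
\dim\Hom(I_Z,\OO_Z)_{-1}\ \ge\ 10(20-s)-s(45-4s)\ =\ 4s^2-55s+200 .
\]

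The argument is mostly bookkeeping, and the step I would check most carefully is the identification, in the degree $-1$ case, of the obstructing syzygies with exactly the linear syzygy space $L$. This rests on reducing to homogeneous syzygies by gradedness, on the absence of syzygies in degree $\le 3$ (because the $f_k$ form a basis), and on the vanishing of $(\OO_Z)_{\ge 4}$, which kills every constraint coming from a syzygy of degree $\ge 5$. The sole inequality — rather than equality — enters through the rank bound for the map into $\Hom_\kk(L,(\OO_Z)_3)$; determining when this map is an isomorphism, which governs the borderline case $s=10$, would require a finer analysis that the present lemma does not need.
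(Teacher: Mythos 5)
Your proof is correct and follows essentially the same route as the paper: both arguments observe that syzygies of degree $\ge 5$ impose no conditions because $(\OO_Z)_{\ge 4}=0$, identify the only possible obstructions in degree $-1$ with the $4(20-s)-35$ linear syzygies, and subtract the resulting $s\cdot(4(20-s)-35)$ conditions from $10(20-s)$. The only cosmetic difference is that you derive the count of linear syzygies from surjectivity of $S_1\otimes(I_Z)_3\to S_4$, whereas the paper reads it off directly from the shape of the minimal presentation.
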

                \begin{proof}
                    By assumption, the presentation of $I_Z$ is
                    \[
                        S(-5)^{\beta} \oplus S(-4)^{4\cdot (20-s)-35} \to
                        S(-3)^{20-s}\to I_Z\to 0.
                    \]
                    Let us first look at degree zero.
                    If we consider the full linear space $\Hom_{\kk}(I_Z, \OO_Z)_0$, then any relation
                    between generators of $I_Z$ is mapped to $(\OO_Z)_{\geq 4} =
                    0$, so $\Hom_{\kk}(I_Z, \OO_Z)_0 = \Hom(I_Z, \OO_Z)_0$.

                    Let us now look at degree one. By similar considerations,
                    for every linear map $\varphi\in \Hom_{\kk}(I_Z,
                    \OO_Z)_{-1}$, the image of $S(-5)^{\beta}$ is zero and the
                    image of $S(-4)^{4\cdot (20-s)-35}$ is contained in
                    the $s$-dimensional space $(\OO_Z)_3$. Thus, the relations
                    in the presentation yield at most $s\cdot \left( 4\cdot (20-s)-35
                    \right)$ linear-algebraic conditions on the images of
                    minimal homogeneous generators and so
                    \[
                        \dim \Hom(I_Z, \OO_Z)_{-1} \geq 10\cdot (20-s) - s\cdot \left( 4\cdot (20-s)-35
                            \right) = 4s^2 - 55s + 200,
                    \]
                    as claimed.
                \end{proof}

                \begin{proposition}[Very compressed
                    loci]\label{ref:verycompressed:prop}
                    Let $H$ be any very compressed Hilbert function and
                    $\delta$ be the largest index such that $H(\delta)\neq 0$. Then the
                    very compressed locus $\mathcal{L}_H$ is closed in
                    $\Hilb_d(\mathbb{A}^n)$, isomorphic to $\mathbb{A}^n \times
                    \Gr(H(\delta), \binom{n-1+\delta}{\delta})$ and has dimension
                    \[
                        n + \binom{n-1+\delta}{\delta} - H(\delta).
                    \]
                \end{proposition}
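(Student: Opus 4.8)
Write $N=\binom{n-1+\delta}{\delta}=\dim_{\kk}S_\delta$ and $k=H(\delta)$, so that $d=k+\sum_{i<\delta}\binom{n-1+i}{i}$. The plan is to pin down exactly which local algebras have Hilbert function $H$, and then to recognise the resulting locus.

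\textbf{Step 1: an algebraic normal form.} First I would show that $[Z]\in\mathcal{L}_H$, with $Z$ irreducible supported at a point $p$ with maximal ideal $\mm_p\subseteq S$, if and only if
\[
    \mm_p^{\delta+1}\subseteq I_Z\subseteq\mm_p^{\delta}\qquad\text{and}\qquad \dim_{\kk}\bigl(I_Z/\mm_p^{\delta+1}\bigr)=N-k.
\]
Indeed $H_A$ is by definition the Hilbert function of $\mathrm{gr}_{\mm_A}(\OO_Z)=S/I_Z^{\ast}$, where $I_Z^{\ast}$ is generated by the lowest-degree homogeneous parts of elements of $I_Z$. The requirement $H_A(i)=\dim S_i$ for all $i<\delta$ says precisely that no element of $I_Z$ has a lowest-degree part of degree $<\delta$, i.e. $I_Z\subseteq\mm_p^{\delta}$; and $H_A(i)=0$ for $i>\delta$ forces $\mm_A^{\delta+1}=0$ by Nakayama, i.e. $\mm_p^{\delta+1}\subseteq I_Z$. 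Conversely these two containments give $H_A(i)=\dim S_i$ for $i<\delta$ and $H_A(i)=0$ for $i>\delta$ at once, while $H_A(\delta)=\dim(\mm_p^{\delta}/I_Z)=N-\dim(I_Z/\mm_p^{\delta+1})$, so the displayed colength condition is equivalent to $H_A(\delta)=k$ (and then the total colength comes out as $d$ automatically). Translating $p$ to the origin, $I_Z$ is therefore determined by the subspace $W:=I_Z\cap S_\delta$ of $S_\delta$; every subspace $W$ with $\dim W=N-k$ occurs (one checks that $W\oplus\mm_0^{\delta+1}$ is an ideal, using $S_1\cdot W\subseteq\mm_0^{\delta+1}$), and $(\OO_Z)_\delta=S_\delta/W$ has dimension $k$.

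\textbf{Step 2: the Grassmannian factor.} Over $G:=\Gr(N-k,S_\delta)$, with tautological sequence $0\to\mathcal{W}\to S_\delta\otimes\OO_G\to\mathcal{Q}\to 0$, the $\OO_G$-module $\bigl(\bigoplus_{i<\delta}S_i\otimes\OO_G\bigr)\oplus\mathcal{Q}$ is locally free of rank $d$, carries a natural $S$-algebra structure, and is the quotient of $S\otimes\OO_G$ by the ideal sheaf $\mathcal{W}\oplus\bigl(\bigoplus_{i>\delta}S_i\otimes\OO_G\bigr)$; hence it is a flat family of length-$d$ subschemes of $\mathbb{A}^n$ and defines a morphism $\Phi_0\colon G\to\Hilb_d(\mathbb{A}^n)$ whose fibre over $[W]$ is $\Spec\bigl(S/(W\oplus\mm_0^{\delta+1})\bigr)$. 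By Step 1, $\Phi_0$ is injective on points, with image the locus $\mathcal{L}_H^{0}\subseteq\mathcal{L}_H$ of origin-supported subschemes. Since $G$ is proper and $\Hilb_d(\mathbb{A}^n)$ is separated, $\Phi_0$ is proper; and it is unramified because at $[W]$ its differential $\Hom_{\kk}(W,S_\delta/W)\to\Hom_S(I_W,\OO_{Z_W})$ is an isomorphism onto the degree-zero part $\Hom_S(I_W,\OO_{Z_W})_0$: a degree-zero homomorphism is exactly a $\kk$-linear map $W\to(\OO_{Z_W})_\delta=S_\delta/W$, with all relations among generators mapping into $(\OO_{Z_W})_{>\delta}=0$ (this is the degree-zero count in Lemma~\ref{ref:tangentAtVeryCompressed:lemma}). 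A proper, unramified, injective morphism is a closed immersion onto its image, so $\Phi_0\colon G\xrightarrow{\sim}\mathcal{L}_H^{0}$ and $\mathcal{L}_H^{0}$ is closed in $\Hilb_d(\mathbb{A}^n)$.

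\textbf{Step 3: splitting off the support point.} Using the translation action I would assemble $\Phi\colon\mathbb{A}^n\times G\to\Hilb_d(\mathbb{A}^n)$, $(v,[W])\mapsto\bigl[\Spec(S/(W\oplus\mm_0^{\delta+1}))+v\bigr]$. It factors through the closed locus of subschemes supported at a single point, on which the support map $\rho$ to $\mathbb{A}^n$ (via Hilbert--Chow onto the small diagonal) is a morphism with $\rho\circ\Phi$ equal to the first projection; thus $\Phi$ is a morphism of $\mathbb{A}^n$-schemes from a scheme proper over $\mathbb{A}^n$ to one separated over $\mathbb{A}^n$, hence proper, with image $\mathcal{L}_H$ and injective on points (both by Step 1). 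It is again unramified: along the $G$-directions the differential is the isomorphism of Step 2, while along $\mathbb{A}^n$ it sends $v$ to the $S$-module homomorphism $f\mapsto\overline{\partial_v f}$, which lies in degree $-1$ and is nonzero already on $S_{\delta+1}\subseteq I_W$ — indeed $v\mapsto(\partial_v\bmod W)$ is injective since $\partial_v(S_{\delta+1})=S_\delta\not\subseteq W$ for $v\neq 0$ (as $k\geq 1$), and degree $-1$ is disjoint from the degree-$0$ image of the $G$-directions. Therefore $\Phi$ is a closed immersion onto $\mathcal{L}_H$, so $\mathcal{L}_H$ is closed in $\Hilb_d(\mathbb{A}^n)$ and, with its reduced structure, isomorphic to $\mathbb{A}^n\times G=\mathbb{A}^n\times\Gr\bigl(H(\delta),\binom{n-1+\delta}{\delta}\bigr)$; the dimension $n+\dim\Gr(k,N)=n+k(N-k)$ follows at once.

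\textbf{Expected main obstacle.} The formal ingredients (flatness of the tautological family, properness via Hilbert--Chow, the principle that a proper unramified injection is a closed immersion) are routine; the real content is the two tangent-space computations, that is, proving $\Phi$ unramified. This requires identifying $\Hom_S(I_W,\OO_{Z_W})$ in degrees $0$ and $-1$ precisely enough to separate the Grassmannian directions from the translation directions, together with the elementary but slightly delicate verification in Step 1 that the condition $\mm_p^{\delta+1}\subseteq I_Z\subseteq\mm_p^{\delta}$ really captures the Hilbert-function condition on $\mathrm{gr}_{\mm_A}(\OO_Z)$.
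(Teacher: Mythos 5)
Your proof is correct, and at its core it follows the same route as the paper: characterize the origin-supported members of $\mathcal{L}_H$ by the containments $\mm_p^{\delta+1}\subseteq I_Z\subseteq\mm_p^{\delta}$, identify them with points of a Grassmannian, and split off the support point by translation. The paper's own proof is only a short sketch (it states a characterization, observes the conditions are closed, and cites \cite[Proposition~2.27]{Szachniewicz} for the product description), whereas you supply the tautological flat family over the Grassmannian, properness via the separatedness of the Hilbert scheme and the Hilbert--Chow support map, and the unramifiedness computation in degrees $0$ and $-1$; these steps all check out (the identification $\Hom(I_W,\OO_{Z_W})_0\simeq\Hom_{\kk}(W,S_\delta/W)$ and the injectivity of $v\mapsto\overline{\partial_v(-)}$ in degree $-1$, which uses characteristic zero, are exactly right, and your injectivity argument is field-independent, which is what the ``proper + unramified + injective $\Rightarrow$ closed immersion'' step really requires). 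Two points where you improve on the text are worth recording. First, your Step~1 correctly includes the condition $\mm_p^{\delta+1}\subseteq I_Z$, which the paper's stated characterization omits and which is genuinely needed: in two variables the ideal $(x^2,xy,y^4)$ has colength $5$, is supported at the origin and is contained in $\mm^2$, yet its Hilbert function is $(1,2,1,1)$ rather than $H_{2,5}=(1,2,2)$. Since this extra condition is also closed once the support is a single point, the conclusion is unaffected. Second, the dimension you obtain, $n+H(\delta)\bigl(\binom{n-1+\delta}{\delta}-H(\delta)\bigr)$, is the correct dimension of $\mathbb{A}^n\times\Gr\bigl(H(\delta),\binom{n-1+\delta}{\delta}\bigr)$; the formula displayed in the statement is inconsistent with the product description and appears to be a typo (for $H=(1,4,3)$ the corresponding component of $\Hilb_8(\mathbb{A}^4)$ has dimension $25=4+3\cdot 7$, not $11$), so trust your count rather than the printed one.
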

                \begin{proof}
                    A point $[Z]\in \Hilb_d(\mathbb{A}^n)$ lies in
                    $\mathcal{L}_H$ if and only if, first, the support of
                    $[Z]$ is a single point $z$ and, second, the ideal $I_Z$
                    is contained in $\mm_{z}^{\delta}$. The first condition is
                    closed and the second is closed provided that the first
                    one is satisfied. The description of $\mathcal{L}_H$ as a
                    product is immediate, see~\cite[Proposition~2.27]{Szachniewicz}.
                \end{proof}

                \subsection{\BBname{} decompositions}

                The general theory of \BBname{} decompositions is beautiful
                but quite complicated, see~\cite{jelisiejew_sienkiewicz__BB,
                jelisiejew_sienkiewicz__BB_poschar, Thaddeus__VGIT}.
                We would like to apply it to the standard scalar torus action
                on the Hilbert scheme. We will see below that in this special
                case things simplify considerably. Therefore, we gather below
                only the necessary facts and restrict to the affine case and
                to the positive \BBname{} decomposition, that is, when
                considering the limit at $t\to 0$.\footnote{Be aware that in
                    some articles by the author, notably~\cite{Jelisiejew__Elementary}, the
                    sign $X^+$ denotes the \emph{negative} \BBname{}
                decomposition, that is, the one coming from considering
                $\lim_{t\to \infty}$.}

                The following allows us to reduce to considering the affine
                case.
                \begin{proposition}[{\cite{Sumihiro},
                        \cite[Proposition~5.3(2)]{jelisiejew_sienkiewicz__BB}}]\label{ref:BBreduceToAffine:prop}
                    Suppose that $X$ is a quasi-projective scheme. Then there
                    is a open cover $\{U_i\}$ by affine $\Gmult$-stable
                    schemes. Moreover, for every such cover the \BBname{} decomposition $X^+$ of $X$ is
                    covered by the \BBname{} decompositions $U_i^+$ of $U_i$.
                \end{proposition}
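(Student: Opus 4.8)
The plan is to deduce both assertions from known results: the existence of the cover from Sumihiro's linearization theorem, and the statement about \BBname{} decompositions from the functorial description of $X^{+}$ together with one elementary observation about $\Gmult$-actions on $\mathbb{A}^1$.

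First I would produce the cover. Since $X$ is quasi-projective it is separated and, by Sumihiro's theorem~\cite{Sumihiro}, admits a $\Gmult$-equivariant locally closed immersion $X\into\mathbb{P}(V)$ into the projectivization of a finite-dimensional $\Gmult$-representation $V$. Choosing a basis of $V$ consisting of $\Gmult$-eigenvectors, the standard coordinate charts of $\mathbb{P}(V)$ are $\Gmult$-stable and affine, so their intersections with $X$ cover $X$ by $\Gmult$-stable \emph{quasi-affine} schemes; covering each such piece once more by $\Gmult$-stable affine opens of the form $\{f\neq 0\}$, where $f$ runs over homogeneous (eigen)functions on the ambient affine space, yields the required cover $\{U_i\}$.

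Second — and this is the heart of the matter — I would show that for every $\Gmult$-stable open $U\subseteq X$ one has $U^{+}=\pi^{-1}(U^{\Gmult})$ as open subschemes of $X^{+}$, where $\pi\colon X^{+}\to X^{\Gmult}$ is the canonical (limit) morphism. Recall that $X^{+}$ represents $T\mapsto\{\Gmult\text{-equivariant morphisms }\mathbb{A}^1\times T\to X\}$ for the scaling action on $\mathbb{A}^1$, that $\pi$ is restriction along $\{0\}\into\mathbb{A}^1$ (so $\pi(x)=\lim_{t\to 0}t\cdot x$), and that $U^{+}\to X^{+}$ is an open immersion. Since $U^{+}$ and $\pi^{-1}(U^{\Gmult})$ are both open subschemes of $X^{+}$, it suffices to check they have the same valued points. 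So let $K$ be a field and $x\in X^{+}(K)$, corresponding to an equivariant $\bar\sigma_{x}\colon\mathbb{A}^1_{K}\to X_{K}$ (the unique extension of the orbit map of $x$, by separatedness). Then $\bar\sigma_{x}^{-1}(U_{K})$ is a $\Gmult$-stable open subscheme of $\mathbb{A}^1_{K}$, hence one of $\emptyset$, $\Gmult_{K}$, $\mathbb{A}^1_{K}$; it contains $0$ exactly when $\pi(x)=\bar\sigma_{x}(0)\in U^{\Gmult}(K)$, and in that case it is forced to equal $\mathbb{A}^1_{K}$, i.e. $\bar\sigma_{x}$ factors through $U_{K}$ and $x\in U^{+}(K)$; the reverse implication is immediate since then $\pi(x)\in U(K)\cap X^{\Gmult}(K)=U^{\Gmult}(K)$. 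This classification of $\Gmult$-stable opens of $\mathbb{A}^1$ is the only non-formal ingredient, and in fact the whole second assertion is the content of~\cite[Proposition~5.3(2)]{jelisiejew_sienkiewicz__BB}.

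Granting the identity $U^{+}=\pi^{-1}(U^{\Gmult})$, the proposition follows at once: for any cover $\{U_i\}$ of $X$ by $\Gmult$-stable opens, restricting to the closed subscheme $X^{\Gmult}\subseteq X$ gives that $\{U_i^{\Gmult}\}$ is an open cover of $X^{\Gmult}$, and pulling back along $\pi$ shows that $\{U_i^{+}\}=\{\pi^{-1}(U_i^{\Gmult})\}$ is an open cover of $\pi^{-1}(X^{\Gmult})=X^{+}$. I do not expect a genuine obstacle here; the only point requiring care is to keep the functorial conventions straight — limit at $t\to 0$ versus $t\to\infty$, and the direction of the structural maps $X^{+}\to X$ and $X^{+}\to X^{\Gmult}$ — so that the statement matches the cited references (note in particular that $U^{+}$ is the preimage of $U^{\Gmult}$ under $\pi$, not the preimage of $U$ under the "forgetful" map $X^{+}\to X$).
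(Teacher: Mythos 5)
The paper offers no proof here at all---the proposition is stated as a quotation of \cite{Sumihiro} and \cite[Proposition~5.3(2)]{jelisiejew_sienkiewicz__BB}---so your write-up is a reconstruction of the cited arguments rather than a parallel to anything in the text, and it is essentially correct: the reduction of the second assertion to the identity $U^{+}=\pi^{-1}(U^{\Gmult})$, proved via the classification of $\Gmult$-stable opens of $\mathbb{A}^1$ and the extension of orbit maps, is exactly the content of the cited Proposition~5.3(2). Two small points of honesty in your sketch. First, you invoke ``$U^{+}\to X^{+}$ is an open immersion'' as a known input before proving $U^{+}=\pi^{-1}(U^{\Gmult})$; that statement is itself part of what Proposition~5.3(2) asserts, so as written your argument is circular unless you either take the whole of the cited proposition as given (which you eventually concede) or first establish that $U^{+}\to X^{+}$ is at least a monomorphism locally of finite type, so that the comparison on field-valued points suffices. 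Second, Sumihiro's linearization theorem is usually stated for \emph{normal} varieties, and Hilbert schemes need not be normal; the paper's citation is equally glib on this point, and in the actual application nothing is lost because the cover $\{U_A\}$ by monomial loci is constructed explicitly, but if you want the general statement for an arbitrary quasi-projective $\Gmult$-scheme you should either restrict to the explicitly covered case or argue that a power of an ample line bundle can be $\Gmult$-linearized in the relevant generality.
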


                Next, a $\Gmult$-action on an affine scheme $\Spec(A)$ is the same
                as a $\mathbb{Z}$-grading on the algebra $A$. In this case,
                the \BBname{} decomposition can be characterised explicitly as
                follows.
                \begin{proposition}\label{ref:BBforAffine:prop}
                    Let $X = \Spec(A)$ be an affine scheme with an action of
                    $\Gmult$. Then, the positive \BBname{} decomposition
                            $X^+$ of $X$ is a closed subscheme $X^+$ given by the
                            ideal generated by $A_{<0}$. The fixed locus of $X$ is given by the ideal
                            generated by $\{A_{<0}\}\cup \{A_{>0}\}$.
                        The composition
                            \[
                                \frac{A}{A_{<0}\cdot A + A_{>0}\cdot A} \simeq
                                \frac{A_0}{(A_{<0}\cdot A)_{0}}\into \frac{A_{\geq 0}}{(A_{<0}\cdot A)_{\geq 0}}
                                \simeq \frac{A}{A_{<0}\cdot A}
                            \]
                            gives a
                            morphism $\pi\colon X^+\to X^{\Gmult}$.
                    The canonical closed embedding $s\colon X^{\Gmult}\to
                    X^+$ is a section of $\pi$. We obtain the following
                    diagram, where $\pi$ and $s$ are closed embeddings
                    \[
                        \begin{tikzcd}
                            X^+ \ar[r, "\theta", hook]\ar[d, "\pi"] & X\\
                            X^{\Gmult}\ar[u, shift left=2, "s", hook]
                        \end{tikzcd}
                    \]
                    Thus, for every $x\in X^{\Gmult}$, the fibre
                    $\pi^{-1}(x)$ is given by spectrum of an
                    $\mathbb{N}$-graded algebra $B  \simeq \frac{A}{A_{<0}\cdot
                    A+(\mm_{x})_0\cdot A}$, which satisfies $B_0
                    = \kk$.
                \end{proposition}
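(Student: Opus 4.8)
The plan is to reduce everything to commutative algebra via the functorial description of the positive \BBname{} decomposition: $X^{+}$ represents the functor $R\mapsto \Hom^{\Gmult}(\mathbb{A}^{1}_{R}, X)$, where $\mathbb{A}^{1}=\Spec\kk[u]$ carries the standard scaling action (so $u$ has weight $1$) and $\pi$ is restriction along the inclusion $\{0\}\into\mathbb{A}^{1}$. Matching this functor with an explicit affine scheme is the technical core; everything else is bookkeeping with homogeneous ideals of the $\mathbb{Z}$-graded ring $A$.

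First I would compute $X^{+}$. A $\Gmult$-equivariant morphism $\mathbb{A}^{1}_{R}\to X$ over $\Spec R$ is a $\kk$-algebra map $\varphi\colon A\to R[u]$ with $\varphi(A_{d})\subseteq R\cdot u^{d}$ for $d\geq 0$ and $\varphi(A_{d})=0$ for $d<0$; being multiplicative, $\varphi$ kills the ideal $A_{<0}\cdot A$, hence factors uniquely through $B_{A}:=A/(A_{<0}\cdot A)$, which is $\mathbb{N}$-graded because $A_{d}=A_{d}\cdot 1\subseteq A_{<0}\cdot A$ whenever $d<0$. A graded $\kk$-algebra map $\psi\colon B_{A}\to R[u]$ is in turn the same datum as an \emph{un}graded one $B_{A}\to R$: writing $\psi(b)=\tilde{\psi}(b)u^{d}$ for $b\in (B_{A})_{d}$ produces a unital multiplicative additive $\tilde\psi\colon B_{A}\to R$, and $\tilde\psi\mapsto\psi$ inverts this, naturally in $R$. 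Hence $X^{+}(R)=(\Spec B_{A})(R)$ functorially, so $X^{+}=\Spec\bigl(A/(A_{<0}\cdot A)\bigr)$ and $\theta$ is the closed embedding induced by $A\onto A/(A_{<0}\cdot A)$. The analogous, easier computation for the fixed-point functor — a $\kk$-point $A\to R$ is fixed iff it kills $A_{\ne 0}$ — gives $X^{\Gmult}=\Spec\bigl(A/(A_{<0}\cdot A+A_{>0}\cdot A)\bigr)$, and since $A_{\ne 0}$ lies in that ideal the quotient is canonically $A_{0}/(A_{<0}\cdot A)_{0}$ (using $(A_{<0}\cdot A)_{0}=(A_{>0}\cdot A)_{0}$ by commutativity).

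Next I would identify the maps. On points $\pi$ sends $\gamma\colon\mathbb{A}^{1}_{R}\to X$ to $\gamma(0)$; on rings this is $A\to R[u]\to R[u]/(u)=R$, which kills $A_{<0}$ and $A_{>0}$, so $\pi$ is exactly the morphism induced by the composite ring map
\[
\frac{A}{A_{<0}\cdot A+A_{>0}\cdot A}\ \xrightarrow{\ \sim\ }\ \frac{A_{0}}{(A_{<0}\cdot A)_{0}}\ \into\ \frac{A_{\geq 0}}{(A_{<0}\cdot A)_{\geq 0}}\ \xrightarrow{\ \sim\ }\ \frac{A}{A_{<0}\cdot A}
\]
from the statement (the two isomorphisms are routine, since $A_{\ne 0}\subseteq A_{<0}\cdot A+A_{>0}\cdot A$ and every class in $A/(A_{<0}\cdot A)$ has a representative of non-negative degree). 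The section $s$ is induced by the surjection $A/(A_{<0}\cdot A)\onto A/(A_{<0}\cdot A+A_{>0}\cdot A)$; chasing a degree-$0$ representative gives $\pi\circ s=\mathrm{id}$, and $\theta$ and $s$ are closed embeddings because the corresponding ring maps are surjective, which also yields commutativity of the triangle (i.e.\ $\theta\circ s$ is the usual embedding $X^{\Gmult}\into X$).

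Finally, for $x\in X^{\Gmult}$ with residue field $\kk$ the maximal ideal $\mm_{x}\subseteq A$ is homogeneous, since $\mm_{x}\supseteq A_{\ne 0}$ forces $\mm_{x}=(\mm_{x})_{0}+A_{\ne 0}$. Computing the scheme-theoretic fibre by base change along $\pi$ and tracing through the identification $X^{\Gmult}\cong\Spec\bigl(A_{0}/(A_{<0}\cdot A)_{0}\bigr)$, the maximal ideal of $x$ pulls back to the ideal of $A/(A_{<0}\cdot A)$ generated by $(\mm_{x})_{0}$, so $B:=\OO\bigl(\pi^{-1}(x)\bigr)\simeq A/\bigl(A_{<0}\cdot A+(\mm_{x})_{0}\cdot A\bigr)$. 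This $B$ is a quotient of the $\mathbb{N}$-graded ring $A/(A_{<0}\cdot A)$ by an ideal generated in degree $0$, hence is $\mathbb{N}$-graded, and $B_{0}=A_{0}/\bigl((A_{<0}\cdot A)_{0}+(\mm_{x})_{0}\bigr)=A_{0}/(\mm_{x})_{0}=\kk$. The step I expect to need the most care is the first identification $X^{+}\cong\Spec(A/(A_{<0}\cdot A))$, i.e.\ reconciling the geometric definition of the \BBname{} cell with the functor of graded maps out of $\mathbb{A}^{1}$; once that is in place, the remaining assertions are formal manipulations of homogeneous ideals.
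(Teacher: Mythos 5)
Your argument is correct. Note, though, that the paper does not actually prove this proposition: it simply cites \cite[Proposition~4.5, Example~4.6]{jelisiejew_sienkiewicz__BB}, so what you have written is in effect a self-contained derivation of that cited result in the affine case. Your route goes through the functorial (Drinfeld-style) characterisation: $X^{+}$ represents $R\mapsto \Hom^{\Gmult}(\mathbb{A}^1_R,X)$ and $X^{\Gmult}$ represents equivariant maps out of trivially-acted schemes, from which the identifications $X^{+}=\Spec(A/A_{<0}\cdot A)$ and $X^{\Gmult}=\Spec(A/(A_{<0}\cdot A+A_{>0}\cdot A))$, together with $\pi$ as ``evaluation at $0$'' (equivalently, the inclusion of the degree-zero part) and $s$ as the canonical surjection, all follow by the homogeneous-ideal bookkeeping you carry out. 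The key reductions are all right: a graded map out of an $\mathbb{N}$-graded ring to $R[u]$ is the same as an ungraded map to $R$; $(A_{>0}\cdot A)_0=(A_{<0}\cdot A)_0$ by commutativity; $\mm_x$ is homogeneous because it contains $A_{\neq 0}$; and the fibre is the base change $A/(A_{<0}\cdot A)\otimes_{A_0/(A_{<0}\cdot A)_0}\kk$. Two minor caveats: (i) your proof presupposes the functorial definition of $X^{+}$ as the starting point, which is indeed the definition used in the reference the paper relies on, so this is a matter of stating your hypotheses rather than a gap; (ii) the conclusion $B_0=\kk$ requires $x$ to be a $\kk$-point (residue field $\kk$), which you correctly assume and which is how the proposition is used later. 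This is more than the paper supplies and is a valid substitute for the citation.
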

                \begin{proof}
                    See for
                    example~\cite[Proposition~4.5, Example~4.6]{jelisiejew_sienkiewicz__BB}.
                \end{proof}

                For a homogeneous maximal ideal $\mm$ in a $\mathbb{Z}$-graded
                ring $A$, the cotangent space at $[\mm]\in\Spec(A)$ is the
                subquotient $\mm/\mm^2$, so is also naturally
                graded. The tangent space at $[\mm]$ is also graded, the
                weights are opposite.
                \begin{example}\label{ex:tangentSpace}
                    In the setup of Proposition~\ref{ref:BBforAffine:prop},
                    take $x\in X^{\Gmult}(\kk)$. Then the cotangent
                    space at $x\in X^+$ is the non-negative part of the
                    cotangent space of $x\in X$. Dualising, we obtain that
                    \[
                        d\theta\colon T_{X^+, x}\to T_{X, x}
                    \]
                    identifies $T_{X^+, x}$ with $(T_{X, x})_{\leq 0}$, the
                    non-\textbf{positive} part of $T_{X, x}$.
                \end{example}

                The weights of the tangent space are crucial for comparing
                $X^+$ and $X$, as the following proposition says.
                \begin{proposition}[{\cite[Proposition~1.6]{jelisiejew_sienkiewicz__BB}}]\label{ref:openImmersion:prop}
                    Let $X$ be a separated scheme locally of finite type (for
                    example, this holds if $X$ is quasi-projective).
                    Assume that $x\in X^{\Gmult}(\kk)$ is such that
                    $d\theta_x$ is surjective (that is, an
                    isomorphism). Then up to restricting to a $\Gmult$-stable
                    affine neighbourhood of $x$ we can assume that $\theta$ is an
                    isomorphism.
                \end{proposition}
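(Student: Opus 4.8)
The plan is to reduce to the affine case and then run a short graded Nakayama argument, following~\cite[Proposition~1.6]{jelisiejew_sienkiewicz__BB}.

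First I would reduce to the case $X=\Spec(A)$ affine. The fixed $\kk$-point $x$ admits a $\Gmult$-stable affine open neighbourhood (by~\cite{Sumihiro}; compare Proposition~\ref{ref:BBreduceToAffine:prop}), and by Proposition~\ref{ref:BBreduceToAffine:prop} the formation of $X^+$ and of the immersion $\theta$ is compatible with restriction to such a neighbourhood, so we may replace $X$ by it. Then $A$ is a finitely generated, hence Noetherian, $\mathbb{Z}$-graded $\kk$-algebra and $\mm\subseteq A$ is the \emph{homogeneous} maximal ideal of $x$, with $A/\mm=\kk$; note $A_{<0}\subseteq\mm$. By Proposition~\ref{ref:BBforAffine:prop}, $\theta\colon X^+\into X$ is the closed immersion $V(J)\into\Spec(A)$, where $J=A_{<0}\cdot A$ is a homogeneous ideal contained in $\mm$.

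Next I would translate the hypothesis into a statement about $J$. For a closed immersion $V(J)\into\Spec(A)$ with $J\subseteq\mm$ the induced map on cotangent spaces $\mm/\mm^2\onto\mm/(\mm^2+J)$ is always surjective with kernel the image of $J$ in $\mm/\mm^2$; dually, $d\theta_x$ is always injective, and it is surjective (equivalently, an isomorphism) precisely when that kernel vanishes, i.e.\ precisely when $J\subseteq\mm^2$. So it remains to show: if $J\subseteq\mm^2$, then $\theta$ is an isomorphism over some $\Gmult$-stable affine open containing $x$.

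Then comes the crucial step: an elementary graded computation. Since $A$ is Noetherian and $J$ is generated by the homogeneous set $A_{<0}$, I would fix finitely many homogeneous generators $g_1,\dots,g_m$ of $J$, all of negative degree. Fix $i$ and put $d=\deg g_i<0$. As $\mm$ is homogeneous, $(\mm^2)_d=\sum_{e}(\mm\cap A_e)(\mm\cap A_{d-e})$, and in each summand the indices $e$ and $d-e$ cannot both be non-negative, so one factor lies in $A_{<0}\subseteq J$; hence $(\mm^2)_d\subseteq\mm\cdot A_{<0}\subseteq\mm J$. Since $g_i\in(\mm^2)_d$ this gives $g_i\in\mm J$, so $J=\mm J$. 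Localising at $\mm$, Nakayama (for the finitely generated $A_\mm$-module $J_\mm$) yields $J_\mm=0$. Finally, as $J$ is finitely generated with $J_\mm=0$, its annihilator is not contained in $\mm$, and being the annihilator of a graded module it is homogeneous, hence contains a homogeneous $s\notin\mm$ with $sJ=0$. Then $D(s)$ is a $\Gmult$-stable affine open neighbourhood of $x$ on which $J$ vanishes, so $\theta$ restricts there to an isomorphism $X^+\cap D(s)\xrightarrow{\ \sim\ }D(s)$, which is what we want.

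I do not anticipate a serious obstacle here: granting Propositions~\ref{ref:BBreduceToAffine:prop} and~\ref{ref:BBforAffine:prop}, the only real content is the graded implication $J\subseteq\mm^2\Rightarrow J=\mm J$, and the small amount of care needed so that the final localisation stays $\Gmult$-stable, which is supplied by the fact that annihilators of graded modules are graded. The reduction to the affine case, and the interpretation of ``$d\theta_x$ surjective'' as ``$J\subseteq\mm^2$'', are both formal.
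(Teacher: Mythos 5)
Your argument is correct, but note that the paper does not prove this proposition at all: it is quoted from \cite[Proposition~1.6]{jelisiejew_sienkiewicz__BB}, where it is established in the general functorial framework for \BBname{} decompositions. What you supply is a genuinely different, self-contained route: after reducing to $X=\Spec(A)$ via Proposition~\ref{ref:BBreduceToAffine:prop}, you identify $X^+$ with $V(J)$ for $J=A_{<0}\cdot A$ (Proposition~\ref{ref:BBforAffine:prop}), translate surjectivity of $d\theta_x$ into $J\subseteq\mm^2$, and then the graded computation $(\mm^2)_d\subseteq \mm\cdot A_{<0}$ for $d<0$ gives $J=\mm J$, so Nakayama plus the homogeneity of $\operatorname{Ann}(J)$ produces a $\Gmult$-stable principal open on which $\theta$ is an isomorphism. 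All steps check out: $\mm_e=A_e$ for $e\neq 0$ because $A/\mm=\kk$ sits in degree $0$, the chosen $s\notin\mm$ is forced to have degree $0$ so $D(s)$ is indeed $\Gmult$-stable, and the compatibility $U^+=X^+\times_X U$ for invariant opens is exactly what Proposition~\ref{ref:BBreduceToAffine:prop} provides. The only caveat is your very first reduction: for a merely separated, locally finite-type $X$ a fixed point need not admit a $\Gmult$-stable affine neighbourhood (Sumihiro requires normality or a linearized quasi-projective setting), so your proof really covers the case where such a neighbourhood exists --- which is how the proposition's conclusion is phrased anyway, and is all that is used for the Hilbert scheme, covered by the invariant affine opens $U_A$. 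The trade-off is clear: the cited proof works in full generality without this reduction, while yours is elementary and entirely sufficient for the application in this paper.
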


                \subsection{\BBname{} decomposition of the Hilbert scheme of
                    points}

                Let $\Gmult =
                \Spec(\kk[t^{\pm1}])$ be a one-dimensional torus and consider
                its action
                \[
                    \Gmult \times \mathbb{A}^n\to \mathbb{A}^n
                \]
                by rescaling: $\lambda\cdot (x_1, \ldots ,x_n) = (\lambda x_1, \ldots
                ,\lambda x_n)$ for every $\kk$-point $(x_1, \ldots ,x_n)\in
                \mathbb{A}^n(\kk)$ and $\lambda\in \kk^{\times} = \Gmult(\kk)$.
                For every closed subscheme $Z\subseteq \mathbb{A}^n$ and $\lambda\in
                \Gmult(\kk)$ we obtain a new closed subscheme $\lambda\cdot Z$
                given by the closed embedding
                \begin{equation}\label{eq:action}
                    \begin{tikzcd}
                        Z \ar[r, hook] & \mathbb{A}^n \ar[r, "\lambda \cdot",
                        "\simeq"'] & \mathbb{A}^n.
                    \end{tikzcd}
                \end{equation}
                When we view a point as a closed subscheme, both definitions
                agree. A subscheme $Z$ is a $\Gmult$-fixed point if and only
                if its ideal $I_Z$ is homogeneous.

                Construction~\eqref{eq:action} generalizes readily to the case
                when $Z$ is closed in $\mathbb{A}^n \times S$, for any scheme
                $S$. This yields an action $\Gmult\times
                \Hilb_d(\mathbb{A}^n)\to \Hilb_d(\mathbb{A}^n)$, which on
                $\kk$-points agrees with~\eqref{eq:action}.

                For any set $A$ of $d$ monomials in $S$, consider the locus
                $U_A\subseteq \Hilb_d(\mathbb{A}^n)$ which consists of $[Z]\in
                \Hilb_d(\mathbb{A}^n)$ such that $A$ spans $\OO_Z$.
                These loci are open and $\Gmult$-stable, hence the
                corresponding \BBname{} cells $U_A^+$ cover
                $(\Hilb_d(\mathbb{A}^n))^+$,
                see Proposition~\ref{ref:BBreduceToAffine:prop}. The
                loci above are important for the computational aspects,
                see for example~\cite{Lella_Roggero__functoriality_of_marked}.

                We would like now to understand when
                Proposition~\ref{ref:openImmersion:prop} can be applied in the
                case of Hilbert schemes, so we are interested in the weights
                on the tangent space.
                \begin{lemma}
                    Let $[Z]\in \Hilb_d(\mathbb{A}^n)$ be a $\Gmult$-fixed
                    point. Then $(T_{\Hilb_d(\mathbb{A}^n), [Z]})_{>0}$
                    vanishes if and only if $\OO_Z$ is very compressed.
                \end{lemma}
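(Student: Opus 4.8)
The plan is to analyze the graded pieces of the tangent space $T_{\Hilb_d(\mathbb{A}^n),[Z]} = \Hom_S(I_Z,\OO_Z)$ directly, using the grading induced by the homogeneity of $I_Z$. Recall from the excerpt that $\Hom_S(I_Z,\OO_Z)_i$ consists of degree-$i$ maps, i.e.\ $\varphi$ with $\varphi((I_Z)_j) \subseteq (\OO_Z)_{i+j}$. So the statement to prove is: $\Hom_S(I_Z,\OO_Z)_i = 0$ for all $i > 0$ if and only if $\OO_Z$ is very compressed, i.e.\ its Hilbert function is the maximal one $H(i) = \min\{\binom{n+i-1}{i}, \text{(remaining length)}\}$ truncated in the appropriate way (here, with $\delta$ the top nonzero degree, $(\OO_Z)_i$ has full dimension $\binom{n+i-1}{i}$ for $i<\delta$).

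First I would prove the easy direction: if $\OO_Z$ is very compressed, then $\Hom_S(I_Z,\OO_Z)_{>0} = 0$. For this, note that a positive-degree map $\varphi$ sends generators of $I_Z$ in degree $j$ (and $I_Z$ is generated in degrees $\geq \delta$, since very-compressedness forces $(I_Z)_j = 0$ for $j<\delta$ and $(I_Z)_\delta$ has codimension $H(\delta)$) into $(\OO_Z)_{i+j}$ with $i+j \geq \delta + 1 > \delta$, hence into $0$. A degree-$i$ homomorphism is determined by the images of minimal generators, so $\varphi = 0$. I should be slightly careful: $I_Z$ might have generators in degree $\delta+1$ too, but $i \geq 1$ pushes even those into degree $\geq \delta+2 > \delta$, so they also map to zero; and there are no generators in degree $\leq \delta - 1$.

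For the converse, I would argue contrapositively: if $\OO_Z$ is \emph{not} very compressed, I exhibit a nonzero element of $\Hom_S(I_Z,\OO_Z)_i$ for some $i>0$. Let $\delta$ be the top degree with $(\OO_Z)_\delta \neq 0$. Not being very compressed means there is some degree $a < \delta$ with $\dim(\OO_Z)_a < \binom{n+a-1}{a}$, equivalently $(I_Z)_a \neq 0$. Pick such $a$ minimal; then $I_Z$ has a minimal generator $f$ in degree $a \leq \delta - 1$. I want a degree-$(\delta - a) \geq 1$ map $\varphi$ with $\varphi(f) \neq 0$: send $f$ to a nonzero element $g \in (\OO_Z)_\delta$ and send the other minimal generators to $0$, then check this respects syzygies. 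The key point is that any syzygy among the generators, when paired against $(g, 0, 0, \dots)$, lands in $(\OO_Z)_{>\delta} = 0$ automatically, because the syzygy coefficient multiplying $f$ has positive degree (a minimal syzygy cannot have a degree-$0$ entry) so it multiplies $g \in (\OO_Z)_\delta$ into $(\OO_Z)_{\geq \delta+1} = 0$. Hence $\varphi$ is a well-defined nonzero homomorphism of degree $\delta - a \geq 1$. This is the main technical point, and it is the same ``syzygies land above the socle'' mechanism used in Lemma~\ref{ref:tangentAtVeryCompressed:lemma}; I expect no serious obstacle beyond bookkeeping the degrees of minimal generators and confirming that minimal syzygies have strictly positive-degree entries, which follows from minimality of the presentation.

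Finally, I would assemble the two directions into the stated equivalence. One subtlety worth a sentence: one must phrase ``very compressed'' so that it pins down the Hilbert function completely given $d$ and $n$ (as in the displayed formula for $H_{n,d}$ in the introduction), so that ``not very compressed'' is exactly ``some $(I_Z)_a \neq 0$ with $a < \delta$''; with that convention the argument above is complete.
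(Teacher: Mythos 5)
Your proposal is correct and follows essentially the same route as the paper: the forward direction by the degree count on images of generators, and the converse by sending a lowest-degree minimal generator to a top-degree (hence socle) element and killing syzygies because their entries lie in the maximal ideal. The only cosmetic difference is that the paper phrases the target as ``a socle element of highest degree'' where you use ``a nonzero element of $(\OO_Z)_\delta$'' — these coincide.
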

                \begin{proof}
                    Take $S = \kk[x_1, \ldots ,x_n]$.
                    Suppose first that $\OO_Z$ is very compressed.
                    Then there exists an $s$ such that $I_Z\subseteq
                    S_{\geq s}$ and $(\OO_{Z})_{>s} = 0$. A tangent at $Z$ of
                    strictly positive degree $i$ corresponds to a homomorphism
                    $\varphi\colon I_Z\to \OO_Z$ such that
                    $\varphi((I_Z)_j)\subseteq (\OO_Z)_{j+i}$. The source is
                    nonzero only for $j\geq s$, but for such a $j$ we have
                    $i+j>s$, so the target is zero. It follows that $\varphi =
                    0$.

                    Suppose now that $\OO_Z$ is not very compressed. This
                    implies that there exists an $s$ such that $I_s \neq 0$
                    and $(\OO_{Z})_{\geq s+1} \neq 0$. Pick a set of minimal
                    generators of $I_Z$ and let $g$ be an element of lowest
                    degree. Pick a socle element $h\in \OO_Z$ of highest
                    degree. Then $\deg(h) \geq s+1 > s \geq \deg(g)$. There
                    exists a homomorphism $\varphi\colon I_Z\to \OO_Z$ which
                    satisfies $\varphi(g) = h$ and sends all other
                    minimal generators to zero. It follows that $\varphi$ is
                    homogeneous of strictly positive degree, equal to
                    $\deg(h)-\deg(g)$.
                \end{proof}

                \subsection{Primary obstructions}

                    Primary obstructions govern the order two part of
                    deformation theory and can be computed explicitly. We
                    discuss them below.

                    Consider two tangent vectors at a point $[Z]\in
                    \Hilb_d(\mathbb{A}^n)$. They yield maps
                    \[
                        \varphi_i\colon \Spec\left(
                        \frac{\kk[\varepsilon_i]}{(\varepsilon_i^2)}
                        \right)\to \Hilb_d(\mathbb{A}^n),
                    \]
                    for $i=1,2$ and two elements
                    \[
                        \varphi_1, \varphi_2\in T_{\Hilb_{d}(\mathbb{A}^n),
                        [Z]} \simeq \Ext^1\left( \OO_Z, \OO_Z \right).
                    \]
                    The two tangent vectors span an at most
                    $2$-dimensional space and the corresponding morphism is
                    \[
                        \varphi_{12}\colon \Spec\left(
                        \frac{\kk[\varepsilon_1,
                        \varepsilon_2]}{(\varepsilon_1, \varepsilon_2)^2}
                        \right)\to \Hilb_d(\mathbb{A}^n),
                    \]
                    which restricts to $\varphi_1$, $\varphi_2$ in the natural
                    way. We may ask when $\varphi_{12}$ does extend to a map
                    $\widetilde{\varphi_{12}}$
                    from $\Spec\left( \kk[\varepsilon_1,
                    \varepsilon_2]/(\varepsilon_1^2, \varepsilon_2^2)
                    \right)$. Deformation
                    theory~\cite[Chapter~5]{fantechi_et_al_fundamental_ag}
                    implies that an extension exists if and only if an
                    obstruction
                    \[
                        ob_{\varphi_{12}, \widetilde{\varphi}_{12}}\in
                        \Ext^2(\OO_Z, \OO_Z)
                    \]
                    vanishes. The key observation is that we can describe
                    the obstruction explicitly.
                    \begin{theorem}[{\cite[Theorem~4.18]{jelisiejew_sivic}}]\label{ref:primary:thm}
                        The obstruction $ob_{\varphi_{12},
                        \widetilde{\varphi}_{12}}$ is equal to
                        \[
                            \varphi_1\circ \varphi_2 + \varphi_2 \circ \varphi_1,
                        \]
                        where $\circ$
                        denotes Yoneda's multiplication in $\Ext^{\bullet}(\OO_Z,
                        \OO_Z)$ applied to $\varphi_1, \varphi_2\in
                        \Ext^1(\OO_Z, \OO_Z)$.
                    \end{theorem}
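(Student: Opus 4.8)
The plan is to make the obstruction calculus of deformation theory completely explicit on a chosen presentation of $\OO_Z$, and then recognize the resulting cocycle as the Yoneda anticommutator. Fix generators $g_1,\dots,g_m$ of $I_Z$ and a free resolution $F_\bullet\to\OO_Z$ over $S=\kk[x_1,\dots,x_n]$ with $F_0=S$, $F_1=S^{\oplus m}\xrightarrow{\mathbf{g}}I_Z$ and $F_2\xrightarrow{d_2}F_1$ presenting the first syzygies of $(g_1,\dots,g_m)$. The small extension at hand is
\[
    \kk[\varepsilon_1,\varepsilon_2]/(\varepsilon_1^2,\varepsilon_2^2)\onto \kk[\varepsilon_1,\varepsilon_2]/(\varepsilon_1,\varepsilon_2)^2,
\]
whose kernel is the one-dimensional square-zero ideal spanned by $\varepsilon_1\varepsilon_2$, and I would compute the obstruction to lifting $\varphi_{12}$ across it. Choosing $S$-linear lifts $\hat\varphi_i\colon I_Z\to S$ of $\varphi_i$, the deformation $\varphi_{12}$ over $R_0:=\kk[\varepsilon_1,\varepsilon_2]/(\varepsilon_1,\varepsilon_2)^2$ is cut out by the perturbed generators $g_j+\varepsilon_1\hat\varphi_1(g_j)+\varepsilon_2\hat\varphi_2(g_j)$, and the fact that $\varphi_1,\varphi_2$ are genuine homomorphisms --- i.e.\ kill the syzygies --- is exactly what lets $d_2$ be lifted $S$-linearly to $d_2+\varepsilon_1 B^{(1)}+\varepsilon_2 B^{(2)}$ over $R_0$, producing auxiliary matrices $B^{(1)},B^{(2)}$ over $S$.

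First I would attempt the lift to $R':=\kk[\varepsilon_1,\varepsilon_2]/(\varepsilon_1^2,\varepsilon_2^2)$ by perturbing the generators further by $\varepsilon_1\varepsilon_2 h_j$ and the relation matrix by $\varepsilon_1\varepsilon_2 C$, then imposing flatness over $R'$, namely that the perturbed matrix annihilate the perturbed generators. The constant, $\varepsilon_1$- and $\varepsilon_2$-parts of this equation hold by construction; only the $\varepsilon_1\varepsilon_2$-coefficient is new (the $\varepsilon_i^2$ cross terms drop out), and reducing it modulo $I_Z$ one sees that the unknowns $C$ and $(h_j)$ contribute, respectively, nothing and precisely the image of $\Hom_S(F_1,\OO_Z)$ in $\Hom_S(F_2,\OO_Z)$. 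Hence the lift exists if and only if the class of the explicit cochain $F_2\to\OO_Z$ obtained by pairing $B^{(1)}$ with $(\hat\varphi_2(g_j))_j$ and $B^{(2)}$ with $(\hat\varphi_1(g_j))_j$ and reducing modulo $I_Z$ vanishes in $\Hom_S(F_2,\OO_Z)/\operatorname{im}\Hom_S(F_1,\OO_Z)=\Ext^1_S(I_Z,\OO_Z)$; this group is canonically $\Ext^2_S(\OO_Z,\OO_Z)$, via the connecting maps of $0\to I_Z\to S\to\OO_Z\to 0$, which are isomorphisms since $\Ext^{\ge 1}_S(S,\OO_Z)=0$. This exhibits $ob_{\varphi_{12},\widetilde{\varphi}_{12}}$ as a concrete element of $\Ext^2_S(\OO_Z,\OO_Z)$.

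It then remains to see this cochain represents $\varphi_1\circ\varphi_2+\varphi_2\circ\varphi_1$. Here I would compute the Yoneda product in the endomorphism dg algebra of $F_\bullet$ (the $S$-linear endomorphisms, with differential $[d,-]$ and product given by composition): each $\varphi_i\in\Hom_S(I_Z,\OO_Z)=\Ext^1_S(\OO_Z,\OO_Z)$ lifts first to $\hat\varphi_i\colon F_1\to F_0$ and then, inductively, to a degree-one endomorphism $\Phi_i=(\Phi_i^{(k)}\colon F_k\to F_{k-1})_k$, and $\varphi_1\circ\varphi_2+\varphi_2\circ\varphi_1$ is represented by $F_2\ni r\mapsto\overline{\Phi_1^{(1)}\Phi_2^{(2)}(r)+\Phi_2^{(1)}\Phi_1^{(2)}(r)}$ in the same model. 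The crucial observation is that the equation defining the first lift, $d_1\Phi_i^{(2)}=\Phi_i^{(1)}d_2$, is the very equation that produced $B^{(i)}$ in the flatness check over $R_0$; taking $\Phi_i^{(2)}=-B^{(i)}$ matches the two constructions term by term and finishes the proof.

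The step needing real care is tracking signs through the three canonical identifications involved --- $\Hom_S(I_Z,\OO_Z)\cong\Ext^1_S(\OO_Z,\OO_Z)$, $\Ext^1_S(I_Z,\OO_Z)\cong\Ext^2_S(\OO_Z,\OO_Z)$, and the identification of the cohomology of the endomorphism dg algebra with $\Ext^\bullet_S(\OO_Z,\OO_Z)$ --- together with the conventions for the dg-algebra product and for the obstruction map; only with these fixed does the answer come out as the \emph{symmetric} combination, which in particular is what makes the self-obstruction $2\,\varphi\circ\varphi$ genuinely nonzero in general when $\mathrm{char}\,\kk\neq 2$. One must also check that all the successive lifts --- of $d_2$ over $R_0$, of the $h_j$ and $C$, and of the $\Phi_i^{(k)}$ --- may be chosen $S$-linearly, so that the compared objects are honest maps of free modules; this is routine because the modules are free and each lifting obstruction vanishes at its stage. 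A more structural route, which I would only mention, presents the deformation functor as the Maurer--Cartan functor of a dg Lie algebra with cohomology $\Ext^{\bullet+1}_S(\OO_Z,\OO_Z)$, whereupon the quadratic part of the obstruction map $H^1\to H^2$ is $\xi\mapsto\tfrac12[\xi,\xi]$ and polarization yields $[\varphi_1,\varphi_2]=\varphi_1\circ\varphi_2+\varphi_2\circ\varphi_1$; this repackages the same computation but still requires identifying the functor with Maurer--Cartan.
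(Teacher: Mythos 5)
Your argument is correct. Note that the paper does not prove this statement at all: it is imported verbatim from \cite[Theorem~4.18]{jelisiejew_sivic}, and the proof there is of exactly the kind you give --- an explicit flatness computation on a free resolution (perturb the generators by $\varepsilon_i\hat\varphi_i$, lift the syzygy matrix, read off the $\varepsilon_1\varepsilon_2$-coefficient, and recognize the resulting cocycle as the symmetric Yoneda product via the lifted chain maps $\Phi_i$) --- so you have essentially reconstructed the cited proof; the only cosmetic slip is that $\Ext^1_S(I_Z,\OO_Z)$ is the quotient of the \emph{cocycles} in $\Hom_S(F_2,\OO_Z)$ by $\im\Hom_S(F_1,\OO_Z)$, not of all of $\Hom_S(F_2,\OO_Z)$, which is harmless since your obstruction cochain is indeed a cocycle.
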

                    \noindent Let $\mu\colon \Sym^2\Ext^1(\OO_Z, \OO_Z)\to \Ext^2(\OO_Z, \OO_Z)$
                    be given by
                    \[
                        \mu(\varphi_1\cdot \varphi_2) := \varphi_1\circ \varphi_2 + \varphi_2 \circ \varphi_1.
                    \]
                    Consider its transpose
                    \[
                        \mu^{\vee}\colon \Ext^2(\OO_Z, \OO_Z)^{\vee}\to
                        \left(\Sym^2\Ext^1(\OO_Z, \OO_Z)\right)^{\vee} \simeq
                        \Sym^2\left(\Ext^1(\OO_Z, \OO_Z)^{\vee}\right).
                    \]
                    As explained in~\cite[\S4.2]{jelisiejew_sivic},
                    Theorem~\ref{ref:primary:thm} yields the following
                    corollary. Recall that $\Ext^1(\OO_Z, \OO_Z)^{\vee}$ is
                    the cotangent space at $[Z]\in \Hilb_d(\mathbb{A}^n)$.
                    \begin{corollary}\label{ref:primaryEquations:cor}
                        Consider the complete local ring
                        $(\hat{\OO}_{\Hilb_d(\mathbb{A}^n), [Z]},
                        \mm_{[Z]})$. Its truncation to second order satisfies
                        \[
                            \frac{\hat{\OO}_{\Hilb_d(\mathbb{A}^n),
                        [Z]}}{\mm_{[Z]}^3} \simeq \frac{\Sym^{\bullet} \Ext^1(\OO_Z,
                        \OO_Z)^{\vee}}{\im\left( \mu^{\vee}\colon \Ext^2(\OO_Z,
                        \OO_Z)^{\vee} \to \Sym^2\Ext^1(\OO_Z,
                        \OO_Z)^{\vee}\right) + \left( \Ext^1(\OO_Z,
                        \OO_Z)^{\vee} \right)^3}.
                        \]
                    \end{corollary}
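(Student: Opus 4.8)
Since $\Hilb_d(\mathbb{A}^n)$ represents the Hilbert functor, the complete local ring $R := \hat{\OO}_{\Hilb_d(\mathbb{A}^n), [Z]}$ pro-represents the local deformation functor $\mathrm{Def}_{[Z]}$, and its cotangent space $\mm_R/\mm_R^2$ is the vector space $W := \Ext^1(\OO_Z, \OO_Z)^{\vee}$ dual to the tangent space. The plan is first to reduce the statement, by pure commutative algebra, to an identification of a subspace of $\Sym^2 W$, and then to pin down that subspace using the explicit description of the primary obstruction in Theorem~\ref{ref:primary:thm}.

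For the commutative-algebra reduction, choose a minimal presentation $R \simeq \hat{P}/I$, where $\hat{P}$ is the completion of $\Sym^{\bullet}W$ at its irrelevant ideal $\mm_{\hat P}$; minimality means precisely $I \subseteq \mm_{\hat P}^2$. Let $\bar I \subseteq \Sym^2 W = \mm_{\hat P}^2 / \mm_{\hat P}^3$ be the space of degree-two leading terms of elements of $I$. Since $I \subseteq \mm_{\hat P}^2$ and $\mm_{\hat P}\cdot \bar I \subseteq \mm_{\hat P}^3$, one checks directly that $I + \mm_{\hat P}^3 = \bar I + \mm_{\hat P}^3$ as ideals of $\hat{P}$, hence
\[
\frac{\hat{\OO}_{\Hilb_d(\mathbb{A}^n), [Z]}}{\mm_{[Z]}^3} \;=\; R/\mm_R^3 \;\simeq\; \hat{P}/(I + \mm_{\hat P}^3) \;\simeq\; \frac{\Sym^{\bullet}W}{\bar I + (W)^3}.
\]
Comparing with the claimed formula, the corollary reduces to the assertion that $\bar I = \im\big(\mu^{\vee}\big)$ inside $\Sym^2 W = \Sym^2\big(\Ext^1(\OO_Z, \OO_Z)^{\vee}\big)$.

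To prove this, I would test against the Artinian $\kk$-algebra $A = \kk[\varepsilon_1, \varepsilon_2]/(\varepsilon_1^2, \varepsilon_2^2)$, together with the square-zero extension $0 \to \kk\cdot\varepsilon_1\varepsilon_2 \to A \to A' := \kk[\varepsilon_1, \varepsilon_2]/(\varepsilon_1, \varepsilon_2)^2 \to 0$. A $\kk$-point of $\mathrm{Def}_{[Z]}(A')$ is a pair $(\varphi_1, \varphi_2) \in \Ext^1(\OO_Z, \OO_Z)^{\oplus 2}$; it corresponds to a homomorphism $R \to A'$, which we lift arbitrarily to $\hat{P} \to A$ using that $\hat{P}$ is a power series ring. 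The resulting map sends $I$ into $\kk\cdot\varepsilon_1\varepsilon_2$, and since $\mm_A^3 = 0$ the induced functional on $I$ factors through $\bar I$ and equals evaluation of degree-two leading terms at $(\varphi_1, \varphi_2)$, that is, pairing with $\varphi_1\cdot\varphi_2 \in \Sym^2\Ext^1(\OO_Z, \OO_Z) = (\Sym^2 W)^{\vee}$. Consequently the composite
\[
\Sym^2\Ext^1(\OO_Z, \OO_Z) \;\onto\; \bar I^{\vee} \;\into\; \Ext^2(\OO_Z, \OO_Z)
\]
carries $\varphi_1\cdot\varphi_2$ to the obstruction to lifting $(\varphi_1, \varphi_2)$ along $A \to A'$, where the first arrow is the transpose of $\bar I \into \Sym^2 W$ and the second is induced by the comparison of the presentation with the obstruction space. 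On the other hand, Theorem~\ref{ref:primary:thm}, applied to the morphism $\varphi_{12}$ determined by $(\varphi_1, \varphi_2)$, computes that obstruction as $\varphi_1\circ\varphi_2 + \varphi_2\circ\varphi_1 = \mu(\varphi_1\cdot\varphi_2)$. Since the products $\varphi_1\cdot\varphi_2$ span $\Sym^2\Ext^1(\OO_Z, \OO_Z)$, the displayed composite is $\mu$ itself; transposing and using that the second arrow above is injective, $\mu^{\vee}$ factors as $\Ext^2(\OO_Z, \OO_Z)^{\vee} \onto \bar I \into \Sym^2 W$, whence $\im(\mu^{\vee}) = \bar I$, as needed.

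I expect the main obstacle to be the last point: identifying the obstruction class extracted from the chosen presentation $R = \hat{P}/I$ — a priori a functional on $I/\mm_{\hat P} I$ — with the intrinsic, Yoneda-theoretic obstruction in $\Ext^2(\OO_Z, \OO_Z)$ provided by Theorem~\ref{ref:primary:thm}, including the injectivity of the comparison map $(I/\mm_{\hat P}I)^{\vee} \into \Ext^2(\OO_Z, \OO_Z)$ for a minimal presentation of a hull, and the bookkeeping of the symmetrization so that the symmetrized product $\mu$, rather than merely $\varphi\circ\varphi$, appears. This is exactly the computation carried out in~\cite[\S4.2]{jelisiejew_sivic}; granting it, everything else is the standard dictionary between pro-representable deformation functors and complete local rings recalled above.
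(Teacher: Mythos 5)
The paper gives no proof of this corollary beyond the citation of \cite[\S4.2]{jelisiejew_sivic}, and your argument is a correct reconstruction of exactly that standard computation: pro-representability, a minimal presentation $R=\hat P/I$ of the complete local ring, and identification of the space of quadratic leading terms of $I$ with $\im(\mu^{\vee})$ by testing against the small extension $\kk[\varepsilon_1,\varepsilon_2]/(\varepsilon_1^2,\varepsilon_2^2)\to\kk[\varepsilon_1,\varepsilon_2]/(\varepsilon_1,\varepsilon_2)^2$ and invoking Theorem~\ref{ref:primary:thm}. The one technical input you defer --- the injectivity of the comparison map $(I/\mm_{\hat P}I)^{\vee}\into\Ext^2(\OO_Z,\OO_Z)$ for a minimal presentation of a hull, and the bookkeeping identifying the presentation-theoretic obstruction with the Yoneda-theoretic one --- is precisely what the cited reference supplies, so your deferral matches the paper's own.
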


                    \subsection{Primary obstructions and \BBname{}
                        decompositions}

                    As written, Corollary~\ref{ref:primaryEquations:cor} does
                    not directly involve the dimension of the local ring.
                    Moreover, we would like to apply it for the fibre of the
                    \BBname{} decomposition. Both subtleties ``cancel out'':
                    restriction to the fibre gives us an $\mathbb{N}$-grading
                    which allows to pass from the third neighbourhood to the
                    full complete local ring.

                    \begin{proposition}\label{ref:BBfiber:prop}
                        Let $[Z]\in \Hilb_d(\mathbb{A}^n)$ be a $\Gmult$-fixed
                        $\kk$-point and consider its \BBname{} fibre $\Spec(A)$.
                        Assume that the subspace
                        \[
                            \left(T_{\Hilb_d(\mathbb{A}^n),
                        [Z]}\right)_{\leq -2}
                        \]
                        is zero.
                        Then there is a surjection of graded algebras
                        \[
                            \frac{\Sym^{\bullet} \left(\Ext^1(\OO_Z,
                                \OO_Z)_{<0}\right)^{\vee}}{\im(\mu^{\vee}\colon
                                \left(\Ext^2(\OO_Z,
                                \OO_Z)_{<0}\right)^{\vee}
                                \to \Sym^2\left(\Ext^1(\OO_Z,
                                \OO_Z)_{<0}\right)^{\vee}
                            )} \onto A.
                        \]
                    \end{proposition}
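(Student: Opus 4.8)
The plan is to apply Corollary~\ref{ref:primaryEquations:cor} to the complete local ring $\hat{\OO}_{\Hilb_d(\mathbb{A}^n),[Z]}$ and then keep careful track of the $\Gmult$-weights while descending to the genuinely graded coordinate ring $A$ of the \BBname{} fibre. Throughout I would restrict to a $\Gmult$-stable affine neighbourhood of $[Z]$, so that Proposition~\ref{ref:BBforAffine:prop} applies; recall from there that $\Spec(A)$ is then an $\mathbb{N}$-graded closed subscheme of $\Hilb_d(\mathbb{A}^n)$ with $A_0 = \kk$ and with distinguished point $V(A_{>0}) = [Z]$.

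First I would determine the cotangent space of $\Spec(A)$ at $[Z]$. Since $\Spec(A)$ is the fibre over $[Z]$ of $\pi\colon X^{+}\to X^{\Gmult}$, its tangent space is $\ker(d\pi_{[Z]})$. By Example~\ref{ex:tangentSpace} the source of $d\pi_{[Z]}$ is $\bigl(T_{\Hilb_d(\mathbb{A}^n),[Z]}\bigr)_{\leq 0}$ and its target is $\bigl(T_{\Hilb_d(\mathbb{A}^n),[Z]}\bigr)_{0}$; as $d\pi_{[Z]}$ is $\Gmult$-equivariant and admits the section $ds_{[Z]}$, it must be the projection onto the weight-zero part, so
\[
    T_{\Spec(A),[Z]} = \bigl(T_{\Hilb_d(\mathbb{A}^n),[Z]}\bigr)_{<0} = \Ext^1(\OO_Z,\OO_Z)_{<0},
\]
which by the hypothesis $\bigl(T_{\Hilb_d(\mathbb{A}^n),[Z]}\bigr)_{\leq -2}=0$ is concentrated in weight $-1$. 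Dually, $\mm_A/\mm_A^2 = \bigl(\Ext^1(\OO_Z,\OO_Z)_{<0}\bigr)^{\vee}$ is concentrated in weight $+1$, so by graded Nakayama $A$ is generated in degree $1$, and there is a surjection of graded algebras $q\colon\Sym^{\bullet}\bigl((\Ext^1(\OO_Z,\OO_Z)_{<0})^{\vee}\bigr)\onto A$. It remains to prove $\im(\mu^{\vee})\subseteq \ker(q)$.

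For this, write $W = \Ext^1(\OO_Z,\OO_Z)^{\vee}$ for the full graded cotangent space at $[Z]$; by the previous paragraph the cotangent map of the closed embedding $\Spec(A)\hookrightarrow \Hilb_d(\mathbb{A}^n)$ is the projection of $W$ onto its weight-$1$ part, which we identify with $\bigl(\Ext^1(\OO_Z,\OO_Z)_{<0}\bigr)^{\vee}$. I would compare two $\Gmult$-equivariant algebra maps $\Sym^{\bullet}(W)\to \hat{\OO}_{\Spec(A),[Z]}$: the composite $\Sym^{\bullet}(W)\to \hat{\OO}_{\Hilb_d(\mathbb{A}^n),[Z]}\onto \hat{\OO}_{\Spec(A),[Z]}$, where the first map is a (necessarily $\Gmult$-equivariant) lift of the cotangent space and the second is induced by the closed embedding; and the composite $\Sym^{\bullet}(W)\onto \Sym^{\bullet}\bigl((\Ext^1(\OO_Z,\OO_Z)_{<0})^{\vee}\bigr)\xrightarrow{q}A\hookrightarrow \hat{\OO}_{\Spec(A),[Z]}$, where the first map kills all weights $\neq 1$. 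Both restrict on $W$ to the projection onto the weight-$1$ part followed by the identification with $\mm_A/\mm_A^2$, so, $\Sym^{\bullet}(W)$ being a polynomial ring, the two composites coincide. Now $\im(\mu^{\vee})\subseteq \Sym^{2}\bigl((\Ext^1(\OO_Z,\OO_Z)_{<0})^{\vee}\bigr)$ is homogeneous of weight $2$, and Corollary~\ref{ref:primaryEquations:cor} tells us that under the first map it is sent into $\mm_{[Z]}^{3}\subseteq \hat{\OO}_{\Hilb_d(\mathbb{A}^n),[Z]}$. Hence its image in $\hat{\OO}_{\Spec(A),[Z]}$ lies in the weight-$2$ part of $\mathfrak{n}^{3}$, where $\mathfrak{n}$ is the maximal ideal of $\hat{\OO}_{\Spec(A),[Z]}$; but $\mathfrak{n}$ is concentrated in strictly positive weights and $2$ is not a sum of three positive integers, so this part vanishes. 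Feeding this through the equal second composite, and using $A\hookrightarrow \hat{\OO}_{\Spec(A),[Z]}$, we obtain $q(\im(\mu^{\vee})) = 0$, i.e.\ $\im(\mu^{\vee})\subseteq \ker(q)$; this gives the claimed surjection $\Sym^{\bullet}\bigl((\Ext^1(\OO_Z,\OO_Z)_{<0})^{\vee}\bigr)/\im(\mu^{\vee})\onto A$.

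The one genuinely delicate ingredient is the interaction between the complete local ring $\hat{\OO}_{\Hilb_d(\mathbb{A}^n),[Z]}$, where Corollary~\ref{ref:primaryEquations:cor} lives, and the honestly graded ring $A$. One must set up the $\Gmult$-weight decomposition of this complete local ring at the fixed point $[Z]$ carefully, verify that the isomorphism of Corollary~\ref{ref:primaryEquations:cor} as well as all the maps above are $\Gmult$-equivariant, and check that the extra equations cutting $\Spec(A)$ out of $X^{+}$ are, in weight $1$, already contained in $\mm_{[Z]}^{2}$ (so that the cotangent spaces really match). Once these weight-bookkeeping points are pinned down, everything else is formal.
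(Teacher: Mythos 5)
Your proof is correct and follows essentially the same route as the paper: apply Corollary~\ref{ref:primaryEquations:cor}, restrict to the negative-weight part of the cotangent space, and use the hypothesis $(T)_{\leq -2}=0$ --- which forces the fibre's coordinate ring to be generated in weight $1$, so that the cube of its maximal ideal has no weight-$2$ part --- to promote the order-two statement to the full graded ring. The paper packages this last step as the isomorphism $\hat{B}/\mm^3 \simeq B/B_{\geq 3}$, which is exactly your observation that ``$2$ is not a sum of three positive integers.''
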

                    \begin{proof}
                        By Proposition~\ref{ref:BBreduceToAffine:prop} we already know that there is an open
                        $\Gmult$-stable neighbourhood $[Z]\in U\subseteq
                        \Hilb_d(\mathbb{A}^n)$ such that the \BBname{} fibre
                        $\pi^{-1}([Z])$ is contained in $U^+$. Using
                        Proposition~\ref{ref:BBforAffine:prop} we conclude
                        that $\pi^{-1}([Z])$ is a spectrum of an
                        $\mathbb{N}$-graded algebra $B$ with $B_0 = \kk$.

                        We now employ Corollary~\ref{ref:primaryEquations:cor}.
                        To make the notation lighter, we put $E :=
                        \Ext^1(\OO_Z, \OO_Z)$.
                        The complete local ring $\hat{B}$ of $[Z]\in \Spec(B)$ is a quotient of the
                        complete local ring of $[Z]\in \Hilb_d(\mathbb{A}^n)$. Hence,
                        also the truncation $\hat{B}/\mm^3$ is a quotient of
                        the truncation of
                        \[
                            \frac{\hat{\OO}_{\Hilb_d(\mathbb{A}^n),[Z]}}{\mm^3}
                             \simeq  \frac{\Sym^{\bullet} E^{\vee}}{\im\left( \mu^{\vee}\colon \Ext^2(\OO_Z,
                            \OO_Z)^{\vee} \to \Sym^2 E^{\vee}\right) + \left( E^{\vee} \right)^3}.
                        \]
                        The cotangent space of $[Z]\in \Spec(B)$ has no
                        nonpositive weights, so $\hat{B}/\mm^3$ is in fact a
                        quotient of
                        \[
                            \frac{\Sym^{\bullet} \left(E_{<0}\right)^{\vee}}{\im\left( \mu^{\vee}\colon
                            \left(\Ext^2(\OO_Z, \OO_Z)_{<0}\right)^{\vee} \to
                            \Sym^2\left(E_{<0}\right)^{\vee}\right) + \left(
                            \left(E_{<0}\right)^{\vee} \right)^3}
                        \]
                        We now lift this from infinitesimal second order to a
                        more global situation using the $\mathbb{N}$-grading.
                        Consider the map of graded rings
                        \[
                            p\colon \Sym^{\bullet}E_{<0}\to B.
                        \]
                        By Example~\ref{ex:tangentSpace},
                        this map is an isomorphism on cotangent spaces. Since
                        $B$ is $\mathbb{N}$-graded with $B_0 = \kk$, the map
                        $p$ is a surjection, by induction on the degree.
                        Moreover, again thanks to the grading and to the fact
                        that all $E_{<0} = E_{-1}$, we obtain
                        an isomorphism $\hat{B}/\mm^3 \simeq B/B_{\geq 3}$.

                        We summarize the situation on
                        a commutative diagram
                        \[
                            \begin{tikzcd}
                                \Sym^{\bullet}E_{<0} \ar[r, two heads]\ar[d,
                                two heads] & B\ar[d]\ar[rd] \\
                            \frac{\Sym^{\bullet} \left(E_{<0}\right)^{\vee}}{\im\left( \mu^{\vee}\colon
                            \left(\Ext^2(\OO_Z, \OO_Z)_{<0}\right)^{\vee} \to
                            \Sym^2\left(E_{<0}\right)^{\vee}\right) + \left(
                            \left(E_{<0}\right)^{\vee} \right)^3} \ar[r,
                            "\simeq"] & \frac{\hat{B}}{\mm^3} \ar[r, "\simeq"]
                            &
                            \frac{B}{B_{\geq 3}}
                            \end{tikzcd}
                        \]
                        This shows that the image $p(\im \mu^{\vee})$ in $B_2$ is zero.
                        The claim follows.
                    \end{proof}

                \subsection{Computational input}

                    As mentioned in the introduction, currently there is not
                    enough knowledge about the Ext algebra to perform a
                    conceptual analysis of the primary obstruction. In this
                    section we include a somewhat brute-force check of
                    specific examples.

                    The package \emph{MatricesAndQuot} is available as an
                    auxiliary file for the arXiv version
                    of~\cite{jelisiejew_sivic}. Needless to say, many
                    alternatives exist, in particular the computation can be
                    performed using Ilten's \emph{VersalDeformations}
                    package~\cite{Ilten} or Lella's
                    \emph{HilbertAndQuotSchemesOfPoints.m2}
                    package~\cite{Lella_package}
                    or the framework~\cite{bertone_cioffi_roggero__smoothable_1771}.

                    \begin{proposition}[Key computational
                        output]\label{ref:fibredim:prop}
                        Let $H = (1,4,10,s)$
                        For every $s = 6,7,8,9$ there is an example of
                        $[Z]\in \mathcal{L}_H\subseteq
                        \Hilb_{15+s}(\mathbb{A}^4)$ such that the algebra
                        \[
                            \frac{\Sym^{\bullet} \left(\Ext^1(\OO_Z,
                                \OO_Z)_{<0}\right)^{\vee}}{\im(\mu^{\vee}\colon
                                \left(\Ext^2(\OO_Z,
                                \OO_Z)_{<0}\right)^{\vee}
                                \to \Sym^2\left(\Ext^1(\OO_Z,
                                \OO_Z)_{<0}\right)^{\vee})}
                            \]
                            is $4$-dimensional.
                    \end{proposition}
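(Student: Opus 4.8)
The plan is to carry out a finite computation in \emph{Macaulay2}, organised in three stages.

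\emph{Choosing $[Z_0]$.} For each $s\in\{6,7,8,9\}$ I would write down one explicit $(20-s)$-dimensional subspace $V\subseteq S_3$, where $S=\kk[x_1,\dots,x_4]$ --- conveniently one spanned by monomials, or by cubics with small integer coefficients --- and set $I_0:=(V)$. The computer checks that $\dim_\kk S_1\cdot V=35$; then $I_0\supseteq S_{\geq 4}$, so $\OO:=S/I_0$ is very compressed with Hilbert function $(1,4,10,s)$, the ideal $I_0$ is generated by cubics, the point $[Z_0]:=[\Spec\OO]$ lies in $\mathcal{L}_H$, and the presentation of $I_0$ has the shape required by Lemma~\ref{ref:tangentAtVeryCompressed:lemma}. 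Everything in sight is defined over $\mathbb{Q}$, and I would tentatively run the computation there; on the choice of ground field see the final paragraph.

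\emph{Assembling the quadrics.} From a minimal graded free resolution $F_\bullet\to\OO$ I would compute $\Ext^1(\OO,\OO)$ and $\Ext^2(\OO,\OO)$ as graded modules (apply $\Hom_S(-,\OO)$, take cohomology, keep track of internal degrees), and extract $E:=\Ext^1(\OO,\OO)_{-1}$ --- of dimension $N$, which Lemma~\ref{ref:tangentAtVeryCompressed:lemma} bounds below by $4s^2-55s+200$ and which the computer determines exactly (it also confirms $\Ext^1(\OO,\OO)_{\leq-2}=0$) --- together with $\Ext^2(\OO,\OO)_{-2}$. Then I would implement the Yoneda square using Theorem~\ref{ref:primary:thm}: represent each class of $E$ by a self-chain-map of $F_\bullet$ of homological degree $-1$ and internal degree $-1$, compose pairs, and symmetrise, producing the linear map
\[
\mu\colon\Sym^2 E\to\Ext^2(\OO,\OO)_{-2},\qquad\mu(\varphi_1\cdot\varphi_2)=\varphi_1\circ\varphi_2+\varphi_2\circ\varphi_1 .
\]
Its transpose $\mu^{\vee}$ yields an explicit system of quadrics in $\kk[E^{\vee}]=\kk[y_1,\dots,y_N]$, and the quotient ring is exactly the algebra $R$ of the statement. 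Finally I would compute $\dim R$ from a Gr\"obner basis and check that it equals $4$. The resolution, $\Ext$, and Yoneda steps can be delegated to \emph{MatricesAndQuot}, or to \emph{VersalDeformations}, or to Lella's package.

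\emph{What to expect, and the main obstacle.} The value $4$ is forced from below: the translates $\{Z_0+v : v\in\mathbb{A}^4\}$ form a $4$-dimensional unobstructed family through $[Z_0]$, so their tangent directions span a $4$-plane $\Lambda\subseteq E$ on which $\mu$ vanishes identically (all obstructions vanish along a genuine family), whence $\Lambda\subseteq\Spec R$ and $\dim R\geq 4$ --- in every characteristic, and without appeal to Proposition~\ref{ref:BBfiber:prop}. The content of the computation is therefore the opposite inequality $\dim R\leq 4$, and this is the genuine difficulty: $N$ grows like $4s^2$, taking the values $14,\,11,\,16,\,29$ for $s=6,7,8,9$ (for $s=10$ it would be $50$, which is why that case is left open), so for $s=9$ one is asked for the Krull dimension of a subscheme of $\mathbb{A}^{29}$ cut out by a large and highly non-generic system of quadrics --- at the edge of what a Gr\"obner basis computation can reach. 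Making this terminate is a matter of choosing $V$ so that the data stay sparse and small, of choosing the monomial order, and, if need be, of computing over a large prime $p$: in that case $\dim R_{\mathbb{F}_p}=4$, together with the semicontinuity inequality $\dim R_{\mathbb{Q}}\leq\dim R_{\mathbb{F}_p}$ (valid for all but finitely many $p$) and the characteristic-free bound $\dim R_{\mathbb{Q}}\geq 4$ just explained, gives $\dim R_{\mathbb{Q}}=4$. The cases $s=6,7,8$ are, by comparison, immediate.
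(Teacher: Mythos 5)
Your proposal is correct and is essentially the paper's own proof: the paper exhibits explicit ideals $I_6,\dots,I_9$ (nested, generated by cubics with small integer coefficients, taken over $\mathbb{F}_{17}$ for speed), runs the primary-obstruction computation in \emph{Macaulay2} via the \emph{MatricesAndQuot} package, and transfers the answer to characteristic zero by the same semicontinuity argument you describe, citing Kleiman--Kleppe. Your characteristic-free lower bound $\dim R\geq 4$ via the translation family is a nice explicit supplement to the paper's terser appeal to semicontinuity, but the substance --- actually producing examples for which the Gr\"obner computation terminates and returns $4$ --- is the same, and is supplied in the paper by the listed ideals.
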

                    \begin{proof}
                        This is an explicit \emph{Macaulay2} computation. See
                        code below. We work over characteristic $17$ for
                        efficiency. By semicontinuity the same result (for the
                        ideals given by obvious lifts of generators) is true
                        in characteristic zero. See~\cite[(10.12)-(10.13)]{Kleiman_Kleppe} for
                        a detailed discussion of this method.
                    \begin{verbatim}
S = (ZZ/17)[x_1 .. x_4];
loadPackage("MatricesAndQuot", Reload=>true);
I9 = ideal(x_2*x_3^2, x_2^2*x_3+x_1*x_4^2+x_4^3,
x_1^2*x_2+x_1*x_3^2+x_2*x_4^2,
x_1^3+x_2^2*x_4+x_2*x_4^2, x_3*x_4^2, x_1^2*x_4,
x_1*x_2*x_4+x_3^2*x_4+x_1*x_4^2, x_1*x_2*x_3+x_3^3+x_1^2*x_4,
x_2^3+x_2*x_3*x_4+x_4^3, x_1^3+x_2^3+x_3^2*x_4,
x_1^2*x_3+x_1*x_2*x_4+x_2*x_4^2);
assert(degree I9 == 24); -- case (1,4,10,9)
assert(dim primaryObstruction(S^1/I9) == 4);
I8 = I9 + ideal(x_1*x_2^2);
assert(degree I8 == 23); -- case (1,4,10,8)
assert(dim primaryObstruction(S^1/I8) == 4);
I7 = I8 + ideal(x_1*x_3^2);
assert(degree I7 == 22); -- case (1,4,10,7)
assert(dim primaryObstruction(S^1/I7) == 4);
I6 = I7 + ideal(x_1^2*x_3);
assert(degree I6 == 21); -- case (1,4,10,6)
assert(dim primaryObstruction(S^1/I6) == 4);
\end{verbatim}
                    The total computation time is a few minutes, the case
                    \texttt{I9}
                    takes most of it.
                    \end{proof}

                \subsection{Proof of Theorem~\ref{ref:mainthm}}

                    We proceed to the proof of our main theorem. Recall that
                    we consider Hilbert functions $H = (1,4,10,s)$ for $s\in
                    \{6,7,8,9\}$.

                    \begin{proof}[Proof of Theorem~\ref{ref:mainthm}]
                        We follow the strategy outlined in the introduction.
                        Fix an $s$, let $d = 1+4+10+s$, and pick a point $[Z]\in \mathcal{L}_H$ as
                        in Proposition~\ref{ref:fibredim:prop}.
                        The fibre $\pi^{-1}([Z])$ is connected as it is a
                        cone.  The group scheme $(\mathbb{A}^4, +)$ acts on
                        the fibre $\pi^{-1}([Z])$ by translations. From this
                        and from $\dim \pi^{-1}([Z]) = 4$ it follows that the fibre is,
                        as a set, equal to the $(\mathbb{A}^4, +)$-orbit of
                        the cone point $[Z]$. By semicontinuity of fibre
                        dimensions, the same holds for fibres near $[Z]$.
                        It follows that, as a set, $\mathcal{L}_H$ contains an
                        open neighbourhood of $[Z]$.

                        From Proposition~\ref{ref:verycompressed:prop}
                        and Lemma~\ref{ref:tangentAtVeryCompressed:lemma} it follows that
                        for every point $[Z']\in \mathcal{L}_H$, the tangent
                        spaces to $\Hilb_{d}(\mathbb{A}^4)$ and
                        $\mathcal{L}_H$ differ already in degree $-1$. This can happen only if
                        the component of the Hilbert scheme that topologically
                        is equal to $\mathcal{L}_H$ has no smooth points, so
                        it is generically nonreduced.

                        To obtain generically nonreduced components of
                        $\Hilb_{d}(\mathbb{A}^4)$ for $d\geq 21$
                        consider a scheme $Z$ as above for $d=21$ and enlarge
                        it to a scheme
                        \[
                            Z\sqcup \bigsqcup_{d-21} \Spec(\kk)
                        \]
                        embedded (in any way) into $\mathbb{A}^4$.
                    \end{proof}

\newcommand{\etalchar}[1]{$^{#1}$}

\end{document}